\let\prece\preccurlyeq
\let\tht\theta
\let\phi\varphi
\let\Gm\varGamma
\def\Bt{\tilde B}
\def\nt{\tilde n}
\def\vt{\tilde v}
\def\wt{\tilde w}
\def\o{\text{\upshape o}}
\def\Hh{\mathcal{H}}
\def\Ho{\Hh^\o}
\def\Bo{B^\o}
\def\Do{D^\o}
\def\Ali{\smash{A_\lm\inv}}
\def\VAli{V\!\Ali}
\def\Tin{\smash{\hat T_n}}
\def\spn{\operatorname{span}}
\def\w#1{\langle#1\rangle}
\def\nw#1{\nn{#1}_w}
\def\no#1{\nn{#1}_\o}
\def\eval#1\at{\rbar{#1}}
\newtheorem{thm}     {Theorem}
\newtheorem{lem}     {Lemma}
\newtheorem{prp}[lem]{Proposition}
\begin{document}  

\date
   {Version 4.3 August 2004}
\title 
   {Hill's Potentials in\\Weighted Sobolev Spaces and their\\Spectral Gaps}
\author 
   {Jürgen Pöschel}
\address
   {Institut für Analysis, Dynamik und Optimierung, 
    Universität Stuttgart\\
    Pfaffenwaldring 57, D-70569 Stuttgart\\
    poschel@mathematik.uni-stuttgart.de, or j@poschel.de}
\maketitle

\section{Results}

In this paper we consider the Schrödinger operator
\[
  L = -\frac{\d^2}{\dx^2}+q
\]
on the interval $[0,1]$, depending on an $L^2$-potential $q$ and endowed with periodic or anti-periodic boundary conditions. In this case, $L$ is also known as \emph{Hill's operator}. Its spectrum is pure point, and for real $q$ consists of an unbounded sequence of real \emph{periodic eigenvalues}
\[
  \lm_0^+(q)<\lm_1^-(q)\le\lm_1^+(q)<
  \dots<\lm_n^-(q)\le\lm_n^+(q)<\cdots\enspace.
\]
Their asymptotic behaviour is
\[
  \lm_n^\pm = n^2\pi^2 + \mean*{q} + \ell^2(n),
\]
where $\mean*{q}$ denotes the mean value of $q$. Equality may occur in every place with a ‘$\le$’-sign, and one speaks of the \emph{gap lengths}
\[
  \gm_n(q) = \lm_n^+(q)-\lm_n^-(q), \qquad n\ge1,
\]
of the {potential} $q$. If a gap length is zero, one speaks of a \emph{collapsed gap}, otherwise of an \emph{open gap}.

We recall that the gaps separate the \emph{spectral bands}
\[
  B_n = \bra[1]{\lm_{n-1}^+,\lm_{n}^-}, \qquad n\ge1,
\]
which are dynamically characterized as the locus of those real~$\lm$, for which all solutions of $Lf=\lm f$ are bounded. In other words, for any $\lm$ in the interior of an open gap as well as for all $\lm<\lm_0^+$, any nontrivial solution of $Lf=\lm f$ is unbounded.

For complex $q$, the periodic eigenvalues are still well defined, but in general not real, since $L$ is no longer self-adjoint. Their asymptotic behaviour is the same, however, and we may order them lexicographically -- first by their real part, then by their imaginary part -- so that
\[
  \lm_0(q)\prec\lm_1^-(q)\prece\lm_1^+(q)\prec\dots
  \prec\lm_n^-(q)\prece\lm_n^+(q)\prec\cdots.
\]
The gap lengths are then defined as before, but may now be complex valued. They are also no longer characterized dynamically.

We are interested in the relationship between the regularity of a potential and the sequence of its gap lengths. Mar\v{c}enko \& Ostrowsk\u{\i} \cite{MO} showed that
\[
  q\in H^k(S^1,\RR)
  \equi {\sum_{n\ge1}} n^{2k}\gm_n^2(q)<\infty
\]
for all nonnegative integers~$k$, while Hochstadt~\cite{Ho} even earlier observed that
\[
  q\in C^\infty(S^1,\RR) 
  \equi \gm_n(q) = O(n^{-k})\enspace\text{for all $k\ge0$}.
\]
Trubowitz \cite{Tr} then proved that
\[
  q\in C^\om(S^1,\RR) 
  \equi \gm_n(q) = O(\e^{-an})\enspace\text{for some $a>0$}.
\] 
Later, due to the realization of the periodic KdV flow as an isospectral deformation of Hill's operator, other regularity classes such as Gevrey functions were also taken into account, as well as non-real potentials. Recent results in this direction are for example due to Sansuc \&~Tkachenko~\cite{ST-96}, Kappeler \&~Mityagin \cite{KM-99,KM-01} and Djakov \& Mityagin \cite{DM-02,DM-03}.
All this shows that within certain limits, one may think of the gap lengths as another kind of Fourier coefficients of the potential.

It is the purpose of this paper to further extend these results and to give a new, short, self-contained proof that applies simultaneously to all cases. This proof does not employ any conformal mappings, trace formula, asymptotic expansions, iterative arguments, or other convolutions. Instead, the essential ingredient is the inverse function theorem.

To set the stage, we introduce \emph{weighted Sobolev spaces} $\Hh^w$ as follows \cite{KM-99,KM-01}. A \emph{normalized weight} is a function $w\maps\ZZ\to\RR$ with
\[
 w_n = w_{-n} \ge 1
\]
for all $n$, and the class of all such weights is denoted by~$\Ww$. The \m{w}-norm $\nw{q}$ of a complex 1-periodic function
$
  q = \sum_{n\in\ZZ} q_n\e^{2n\pi ix}
$
is then defined through
\[
  \nn{q}^2_{w} = \sum_{n\in\ZZ} w_n^2\n{q_n}^2,
\]
and 
\[
  \Hh^w = \set*{q\in L^2(S^1,\CC): \nw{q}<\infty}
\]
is the Banach space of all such functions with finite $w$-norm. Note that
\[
  \Ho \defequal \bigcup_{w\in\Ww} \Hh^w = L^2(S^1,\CC),
\]
since all weights are assumed to be at least~1.

Here are some examples of relevant weights. The trivial weights
$
  w_n = 1
$
give rise to the underlying Banach space $\Ho=L^2(S^1)$.
Letting $\w{n}=1+\n{n}$ and $r\ge0$, $a\ge0$, the polynomial weights
\[
  w_n = \w{n}^r
\] 
give rise to the usual Sobolev spaces $H^r(S^1)$, and the exponential weights
\[
  w_n = \w{n}^r\e^{a\n{n}}
\] 
give rise to spaces $H^{r,a}(S^1)$ of functions in $L^2(S^1)$, that are analytic on the strip $\n{\Im z}<a/2\pi$ with traces in $H^r(S^1)$ on the boundary lines. In between are, among others, the subexponential weights
\[
  w_n = \w{n}^r\e^{a\n{n}^{\sg}}, \qquad 0<\sg<1,
\] 
giving rise to Gevrey spaces $H^{r,a,\sg}(S^1)$, and weights of the form
\[
  w_n = \w{n}^r\exp\pasii{\frac{a\n{n}}{1+\log^{\,\al}\!\w{n}}}, \qquad \al>0.
\] 
More examples are given below.

For the most part we will be concerned with the subclass $\Mm\subset\Ww$ of weights that are also \emph{submultiplicative}. That is,
\[
  w_{n+m}\le w_nw_m
\]
for all $n$ and~$m$. This implies in particular that $w_n\le w_1^n$ for all $n\ge1$, so submultiplicative weights can not grow faster than exponentially. All the weights given above are submultiplicative, and
\[
  \Hh^\om \defequal \bigcap_{w\in\Mm} \Hh^w 
\]
is the space of all entire functions of period~1. It turns out that only in the submultiplicative case, and more precisely in the subexponential case, there is a one-to-one relationship between the decay rates of Fourier coefficients and spectral gap lengths.

We begin by considering the forward problem of controlling the gap lengths of a potential in terms of its regularity, first for submultiplicative weights -- see \cite{KM-01}.

\begin{thm} \label{t:gaps}
If $q\in\Hh^w$ with $w\in\Mm$, then
\[
  \sum_{n\ge N} w_{n}^2\n{\gm_n(q)}^2 \le
  9\nw{T_Nq}^2+\frac{576}{N}\nw{q}^4
\]
for all $N\ge4\nw{q}$, where $T_Nq = \sum_{\n{n}\ge N} q_n\e^{2n\pi ix}$.
\end{thm}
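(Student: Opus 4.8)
The plan is to localize the two periodic eigenvalues $\lm_n^\pm$ near $n^2\pi^2$ by a Lyapunov--Schmidt (Schur complement) reduction, which turns the computation of $\gm_n$ into a $2\times2$ problem governed by the $n$-th Fourier coefficient of $q$ together with a controllably small remainder. I work in the Fourier basis $\e^{in\pi x}$, $n\in\ZZ$, of $L^2$ of the doubled circle --- on which the periodic and anti-periodic problems together realize the spectrum $(\lm_n^\pm)$ --- and write $q=\mean*{q}+q^\o$, let $V$ be multiplication by $q^\o$, and $A_\lm=-\d^2/\dx^2+\mean*{q}-\lm$, so that $L-\lm=A_\lm+V$. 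For $\lm$ off the unperturbed eigenvalues $m^2\pi^2+\mean*{q}$, $A_\lm$ is boundedly invertible, $V\!A_\lm\inv$ is Hilbert--Schmidt, and $L-\lm$ is singular exactly when $1+V\!A_\lm\inv$ is. Splitting $A_\lm\inv=d(\lm)\inv\hat P_n+A_\lm\inv(1-\hat P_n)$, with $\hat P_n$ the projection onto $\spn\set*{\e^{\pm in\pi x}}$ and $d(\lm):=n^2\pi^2+\mean*{q}-\lm$, and putting $B_\lm:=1+V\!A_\lm\inv(1-\hat P_n)$, one has $1+V\!A_\lm\inv=B_\lm\bigl(1+d(\lm)\inv B_\lm\inv V\!\hat P_n\bigr)$; the singularity hinges on the rank-two operator $B_\lm\inv V\!\hat P_n$, and compressing to the range of $\hat P_n$ yields the reduced equation $\det\bigl(d(\lm)I+\Tin(\lm)\bigr)=0$, where the $2\times2$ matrix $\Tin(\lm)=\hat P_n B_\lm\inv V\!\hat P_n$ has, in the basis $\e^{\pm in\pi x}$, off-diagonal entries $q_n,q_{-n}$ and vanishing diagonal, corrected by $R_n(\lm):=\hat P_n V\!A_\lm\inv(1-\hat P_n)B_\lm\inv V\!\hat P_n$.

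First I would record the necessary a priori input: a standard counting (Rouch\'e) argument on $A_\lm\inv$ shows that for $n\ge 4\nw q$ exactly two periodic eigenvalues, $\lm_n^\pm$, lie in a disk about $n^2\pi^2$ of radius a fixed fraction of $n$, and that they are precisely the two zeros of $\det\bigl(d(\lm)I+\Tin(\lm)\bigr)$ there. On that disk every non-resonant denominator $\n{m^2\pi^2+\mean*{q}-\lm}$ ($m\ne\pm n$) exceeds a fixed multiple of $n$, so $\nn{V\!A_\lm\inv(1-\hat P_n)}\le C\nw q/n<1$ (making $B_\lm$ invertible) and $\nn{R_n(\lm)}=O(\nw q^2/n)$. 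Writing $R_{ij}$ for the entries of $R_n(\lm)$ and $\operatorname{disc}\Tin:=(\operatorname{tr}\Tin)^2-4\det\Tin=(R_{11}-R_{22})^2+4(q_n-R_{12})(q_{-n}-R_{21})$, a short $2\times2$ perturbation argument --- solving $d^2+d\operatorname{tr}\Tin(\lm)+\det\Tin(\lm)=0$ for $d=d(\lm)$ self-consistently and absorbing the difference of the trace terms via the Cauchy estimate $\nn{R_n'(\lm)}=O(\nw q^2/n^2)$ --- gives
\[
  \gm_n \;\le\; \bigl(1+O(\nw q^2/N^2)\bigr)\,\sup_{\lm}\,\n{\operatorname{disc}\Tin(\lm)}^{1/2},
\]
the supremum over the disk; here the only self-referential step has been isolated, and the case $\gm_n=0$ is trivial.

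The decisive step is the weighted remainder bound $\sum_{n\ge N}w_n^2\,\nn{R_n(\lm)}^2\le\frac{C}{N}\nw q^4$, uniformly on the disks; it is here that submultiplicativity enters. Expanding $R_n$ through the Neumann series for $B_\lm\inv$, each matrix entry becomes a sum over chains of Fourier jumps $2j_1,\dots,2j_\ell$ leading from $-n$ to $n$ (so $j_1+\dots+j_\ell=n$), weighted by $q_{j_1}\cdots q_{j_\ell}$ and by the denominators $m^2\pi^2+\mean*{q}-\lm$ at the intermediate indices $m$, each of which is non-resonant and so of modulus at least a fixed multiple of $n$. Submultiplicativity is exactly what lets the weight of the resonant mode be redistributed, $w_n=w_{j_1+\dots+j_\ell}\le w_{j_1}\cdots w_{j_\ell}$, turning $w_n\n{q_{j_1}}\cdots\n{q_{j_\ell}}$ into $\prod_i(w_{j_i}\n{q_{j_i}})$, a product of terms of an $\ell^2$-summable sequence of norm $\nw q$. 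Together with the elementary splitting $\n{m^2\pi^2+\mean*{q}-\lm}^{-1}\le C n^{-1}\bigl(\n{s}^{-1}+\n{s-n}^{-1}\bigr)$ of each intermediate denominator --- where $s$ is the corresponding partial sum of the $j_i$, so $\n{s}$ and $\n{s-n}$ are again sums of $j_i$'s --- one sums the chains by Young's inequality (keeping one factor $\n{\cdot}^{-1}$ per denominator to stay in $\ell^1$), and then sums over $n\ge N$; this produces the factor $1/N$ (indeed $1/N^2$ already from the leading, $\ell=2$ term) times a convergent power series in $\nw q/N$. The main obstacle is precisely this bookkeeping --- arranging the chain sums so that submultiplicativity transports the weight while enough decay from the denominators survives.

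Finally I would assemble the estimate: from the displayed bound for $\gm_n$, together with $4\n{q_n}\,\n{R_{21}}\le\n{q_n}^2+4\n{R_{21}}^2$ and the like applied to the expansion of $\n{\operatorname{disc}\Tin(\lm)}$, one obtains $\gm_n^2\le 3\bigl(\n{q_n}^2+\n{q_{-n}}^2\bigr)+C\,\nn{R_n(\lm)}^2$ up to the small factor. Multiplying by $w_n^2=w_{-n}^2$ and summing over $n\ge N$, the first part contributes a constant times $\sum_{\n{k}\ge N}w_k^2\n{q_k}^2=\nw{T_Nq}^2$ and the second contributes $O(\nw q^4/N)$ by the weighted remainder bound; tracking the constants --- and absorbing the factor $1+O(\nw q^2/N^2)$ using $N\ge 4\nw q$ --- yields the asserted inequality.
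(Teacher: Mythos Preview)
Your outline is sound and would lead to the theorem, but the route you take differs from the paper's in one essential technical point. You propose to control the weighted remainder $\sum_{n\ge N} w_n^2\nn{R_n(\lm)}^2$ by expanding the Neumann series for $B_\lm^{-1}$, writing each matrix entry as a sum over chains $j_1+\dots+j_\ell=n$, distributing the weight via $w_n\le w_{j_1}\cdots w_{j_\ell}$, splitting the intermediate denominators as $\n{s}^{-1}+\n{s-n}^{-1}$, and summing by Young's inequality. This is essentially the original Kappeler--Mityagin / Djakov--Mityagin combinatorics, and it works, but it is exactly the bookkeeping the paper is designed to bypass. The paper's device is the \emph{shifted} $w$-norm $\nn{u}_{w;i}=\nw{ue_i}$: one shows once (Lemma~\ref{l:T}) that $T_n=V\!A_\lm^{-1}Q_n$ has operator norm at most $\frac{2}{n}\nw{q}$ in $\nn{\cd}_{w;i}$ \emph{uniformly in the shift}~$i$, submultiplicativity entering only through the single convolution bound $\nw{Vg}\le\nw{q}\,\nn{g}_{w,1}$. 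Then the identity $w_n\n{\pair{f}{e_{-n}}}\le\nn{f}_{w;-n}$ and $c_n-q_n=\pair{\Tin T_nVe_n}{e_{-n}}$ immediately give $w_n\n{c_n-q_n}\le\frac{4}{n}\nw{q}^2$ in two lines, with no chain expansion at all. Your discriminant/self-consistent argument for the gap is also replaced by a short degree-theoretic localization (Lemma~\ref{l:gap}) giving $\n{\gm_n}^2\le 9\n{c_nc_{-n}}$ directly. In short: your approach is the more traditional, combinatorial one and would reproduce the result with more labor; the paper's contribution is precisely to replace that labor by the shifted-norm trick.
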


We note in passing that finite gap potentials are dense in~$\Hh^w$ for $w\in\Mm$. More specifically, we call $q$ an \emph{\m{N}-gap potential}, if $\gm_n(q)=0$ for all $n>N$. But we do not insist, that the first $N$ gaps are all open.

\begin{thm} \label{t:dense}
The union of \m{N}-gap potentials is dense in $\Hh^w$ for $w\in\Mm$.
\end{thm}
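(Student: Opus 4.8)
The plan is to show that for fixed $q\in\Hh^w$ and $\varepsilon>0$ there is a finite-gap potential within $\varepsilon$ of $q$ in the $w$-norm. The natural first move is to truncate: set $q^{(N)}=q-T_Nq=\sum_{|n|<N}q_n\e^{2n\pi ix}$, which is a trigonometric polynomial and converges to $q$ in $\Hh^w$ as $N\to\infty$ because $\nw{T_Nq}^2=\sum_{|n|\ge N}w_n^2|q_n|^2\to0$. So it suffices to approximate an arbitrary trigonometric polynomial by finite-gap potentials. For such a polynomial $p$ of degree $<N$, Theorem \ref{t:gaps} already gives a quantitative handle: applied to $p$ itself (with any $N'\ge 4\nw{p}$) it bounds $\sum_{n\ge N'}w_n^2|\gm_n(p)|^2$ by $\tfrac{576}{N'}\nw{p}^4$, since $T_{N'}p=0$. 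Thus the high gaps of a polynomial are already small in the weighted $\ell^2$ sense — but not zero, so $p$ need not itself be a finite-gap potential; one more step is needed.

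The key remaining step is to invoke whatever isospectral/Fourier-coefficient correspondence the paper develops (the "inverse function theorem" machinery alluded to in the introduction): given a potential $p$ whose tail gap data $(\gm_n(p))_{n>N}$ is small, one can find a nearby potential $\tilde p$ with exactly the same low spectral data but with $\gm_n(\tilde p)=0$ for all $n>N$, i.e. an $N$-gap potential, and with $\nw{p-\tilde p}$ controlled by the size of the tail $\sum_{n>N}w_n^2|\gm_n(p)|^2$. Concretely, I expect the map sending a potential to (its low periodic eigenvalues together with its tail gap sequence) to be a local diffeomorphism between appropriate $w$-weighted spaces near each finite-gap potential; setting the tail gaps to zero while freezing the low data then lands us, via the inverse map, on an $N$-gap potential whose $w$-distance to $p$ is $O\big((\sum_{n>N}w_n^2|\gm_n(p)|^2)^{1/2}\big)$. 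Combined with the bound from Theorem \ref{t:gaps}, this distance is $O(N^{-1/2}\nw{p}^2)$, which we can make as small as we like by first choosing $N$ large.

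Putting the pieces together: given $q$ and $\varepsilon>0$, pick $N$ so large that $\nw{T_Nq}<\varepsilon/3$; then $q^{(N)}=q-T_Nq$ is a polynomial with $\nw{q^{(N)}}$ bounded independently of further choices, and by enlarging $N$ if necessary the finite-gap potential $\tilde q$ produced by the previous paragraph satisfies $\nw{q^{(N)}-\tilde q}<\varepsilon/3$; hence $\nw{q-\tilde q}<\varepsilon$. The main obstacle is the middle step — establishing, with $w$-weighted estimates, that the tail gap sequence can be killed by a small perturbation while holding the remaining spectral data fixed; this is exactly where the local invertibility of the spectral map (the paper's central tool) must be brought to bear, and the bookkeeping of which coordinates are held fixed versus varied, in a Banach space tied to the weight $w$, is the delicate point. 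If the paper instead supplies a direct construction of finite-gap potentials realizing prescribed low data (e.g. via a Dirichlet-type parametrization), the argument simplifies to: approximate $q$ by a polynomial, read off finitely many spectral parameters, and build the finite-gap potential with those parameters, again with a $w$-norm estimate coming from Theorem \ref{t:gaps}.
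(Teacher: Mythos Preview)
Your instinct---use a near-identity diffeomorphism to pull back trigonometric polynomials to finite-gap potentials---is exactly the paper's strategy, but the execution diverges in two ways, one of which is a real gap.

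First, the detour through truncating $q$ and then invoking Theorem~\ref{t:gaps} to show that a trigonometric polynomial has small tail gaps is unnecessary. The paper does not truncate $q$ at all; it truncates in the \emph{image} of the diffeomorphism and pulls back.

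Second, and this is the actual problem: the map you propose to invert---``potential $\mapsto$ (low periodic eigenvalues, tail gap sequence $(\gm_n)_{n>N}$)''---is \emph{not} a local diffeomorphism. Gap lengths underdetermine the potential even locally (there is an isospectral set of positive dimension in each open-gap direction), so the inverse function theorem cannot be applied to this map. Your hedge about ``holding the remaining spectral data fixed'' is where the argument is incomplete: you would need to name a full set of coordinates, and gap lengths alone are not one.

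The paper resolves this by constructing a different map. Its adapted Fourier coefficient map $\Phi_m$ (Proposition~\ref{Phi}) sends $q$ to a function $p$ whose $n$-th Fourier coefficient, for $|n|\ge M_m$, is $p_n=c_n(\al_n(q),q)$, the off-diagonal entry of the reduced $2\times2$ matrix $S_n$ at the point where the diagonal vanishes. This $\Phi_m$ \emph{is} a diffeomorphism on $B^w_m$ for every $w\in\Mm$, with $\|D\Phi_m-I\|\le\tfrac18$. The crucial observation is that if $p_n=p_{-n}=0$ then $S_n(\al_n)=0$ identically, so $\al_n(q)$ is a double periodic eigenvalue and the $n$-th gap collapses. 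Hence any $q$ with $\Phi_m(q)\in\spn\{e_{2k}:|k|\le N\}$ is an $N$-gap potential. Since trigonometric polynomials are dense in $\Hh^w$ and $\Phi_m$ is a diffeomorphism on $B^w_m$, the preimages of trigonometric polynomials---which are $N$-gap potentials---are dense in $B^w_m$; letting $m$ vary finishes the proof. No estimate of how small the tail gaps are is ever needed.
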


We now turn to the converse problem of recovering the regularity of a potential from the asymptotic behaviour of its gap lengths. Here the situation is not as clear cut as for the forward problem.
Gasymov~\cite{Gas} observed that \emph{any} $L^2$-potential of the form
\[
  q = \sum_{n\ge1} q_n\e^{2n\pi ix} =
      \eval \smash[b]{\sum_{n\ge1} q_nz^n} \at_{z=\e^{2\pi ix}}
\]
is a {$0$-gap potential}. In the complex case, the gap sequence therefore need not contain \emph{any} information about the regularity of the potential.
But even in the real case the situation is not completely straightforward, as there are finite gap potentials, that are not entire functions, but have poles. Thus, although in this case  $\gm_n\sim\e^{-an}$ for \emph{any} $a>0$, we have $q_n\sim\e^{-\al n}$ only for \emph{some} $\al>0$. 

To obtain a true converse to Theorem~\ref{t:gaps} we need to exclude \emph{exponential} weights, that is, submultiplicative weights~$w$ with
\[
  \liminf_{n\to\infty}\, \frac{\log w(n)}{n} > 0.
\]
We call a weight \emph{strictly subexponential}, if 
\[
  \frac{\log w(n)}{n} \to 0 \qqt{as} n\to\infty
\]
in an eventually \emph{monotone} manner, while $w(n)$ itself is assumed to be nondecreasing for $n\ge0$. -- The following theorem extends results of \cite{DM-02}.

\begin{thm} \label{t:real}
Suppose $q\in\Ho$ is real, and its gap lengths satisfy
\[
  \sum_{n\ge1} w_n^2\n{\gm_n(q)}^2 < \infty.
\]
If $w$ is strictly subexponential, then $q\in\Hh^w$.
On the other hand, if $w$ is exponential, then $q$ is real analytic.
\end{thm}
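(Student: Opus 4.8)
The plan is to run the argument through the inverse spectral map, exploiting the symmetry between the forward and backward directions. Let me set up the framework I would use. For a real potential $q$, the gap lengths $\gm_n$ together with appropriate auxiliary data (typically the location of the Dirichlet eigenvalues, or equivalently a choice of sign/angle in each gap) furnish a complete set of spectral coordinates. The key quantitative input is Theorem~\ref{t:gaps}, which bounds $\sum_{n\ge N} w_n^2|\gm_n(q)|^2$ in terms of $\nw{T_N q}^2$ plus a lower-order term. What I need is essentially a \emph{reverse} inequality of the same shape: that $\nw{T_N q}^2$ is controlled by the weighted tail $\sum_{n\ge N} w_n^2|\gm_n(q)|^2$ up to a term that is lower order in $N$. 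Granted such a two-sided estimate, the conclusion is a bootstrap: starting from $q\in L^2$ and the hypothesis that the full weighted sum of squared gap lengths is finite, one shows $q\in\Hh^{w'}$ for a sequence of weights $w'$ interpolating between the trivial weight and $w$, improving the regularity at each stage until $w'=w$ is reached.

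The first step is to record, via the inverse function theorem (as the introduction advertises), that the gap-length map $q\mapsto(\gm_n(q))_{n\ge1}$, suitably augmented, is a local diffeomorphism near each point of $\Hh^w$, with derivative close to a diagonal (Fourier-type) operator: roughly, $\partial\gm_n/\partial q_k$ is concentrated near $k=\pm n$ with off-diagonal decay governed by submultiplicativity $w_{n+m}\le w_n w_m$. The second step is to establish the reverse tail estimate. Here the strict subexponentiality hypothesis enters decisively: because $\log w(n)/n\to0$ monotonically and $w$ is nondecreasing, one gets that for any $\eps>0$ there is a constant with $w_{n+m}\le C_\eps\, e^{\eps m} w_n$ for $m$ in a controlled range, which lets the off-diagonal contributions in the reverse map be absorbed. (For the second, exponential, case one does not need to be this careful — any $a>0$ works — and the statement that $q$ is real analytic follows from Trubowitz's characterization once one knows $\gm_n = O(e^{-an})$ for some $a>0$, which the finiteness of $\sum w_n^2|\gm_n|^2$ with exponential $w$ immediately gives.) The third step is the bootstrap: define $w^{(j)}$ by interpolating the exponent, apply the reverse estimate at level $w^{(j)}$ using that $q\in\Hh^{w^{(j-1)}}$ is already known, and iterate; monotonicity of $w$ guarantees the scheme terminates with $q\in\Hh^w$.

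The main obstacle I anticipate is the reverse tail estimate — proving that $\nw{T_N q}$ is dominated by $\bigl(\sum_{n\ge N} w_n^2|\gm_n|^2\bigr)^{1/2}$ plus a term that genuinely decays in $N$. The difficulty is that the inverse of the gap map is not literally diagonal, so one must control the propagation of the weight through a nearly-triangular infinite matrix, and the off-diagonal tails are precisely where the distinction between strictly subexponential and exponential weights lives. I expect that submultiplicativity handles the algebra of the convolution-type error terms, while strict subexponentiality (the monotone decay of $\log w(n)/n$ together with monotonicity of $w$) is exactly what is needed to make the geometric-series estimates for these error terms converge with the right constant, so that the lower-order remainder really is $o(1)$ as $N\to\infty$ rather than merely bounded. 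Everything else — the local diffeomorphism property, the finite-gap approximation from Theorem~\ref{t:dense}, and the termination of the bootstrap — should be routine once this estimate is in hand.
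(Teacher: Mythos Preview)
Your bootstrap strategy is a plausible outline, but the crux --- the ``reverse tail estimate'' that would let you climb from $\Hh^{w^{(j-1)}}$ to $\Hh^{w^{(j)}}$ --- is left as a hope rather than established, and you give no argument that the iteration actually reaches $w$ rather than stalling at some strictly smaller weight. The difficulty you flag (controlling how the weight propagates through the off-diagonal part of the inverse map) is real, and your treatment of it is purely heuristic. A smaller point: augmenting the gap-length map with Dirichlet data is unnecessary in the real case, since for real $q$ the off-diagonal entries of the reduced matrix $S_n$ satisfy $p_{-n}=\overline{p_n}$, so $|p_n|=|p_{-n}|$ automatically and the gap lengths alone suffice.

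The paper's proof sidesteps the bootstrap entirely with one trick. It constructs a concrete near-identity analytic map $\Phi_m$ on the ball $\Bo_m\subset\Ho$ (the \emph{adapted Fourier coefficient map}) whose restriction to $B^w_m$ is a diffeomorphism into $\Hh^w$ for \emph{every} $w\in\Mm$ simultaneously, with the uniform bound $\nn{D\Phi_m-I}\le1/8$. For real $q$, Lemma~\ref{l:skew} gives $|p_n|\le|\gm_n(q)|$ directly, so the hypothesis yields $p=\Phi_m(q)\in\Hh^w$. The remaining obstacle is that one only knows $\nw{p}<\infty$, not $p\in B^w_{m/2}$, so the inverse function theorem in $\Hh^w$ does not apply immediately. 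The key idea (Proposition~\ref{sub} and Lemma~\ref{l:w}) is to replace $w$ by the tempered weight $w_\ep=\min(v_\ep,w)$ with $v_\ep=\e^{\ep|\cdot|}$: for strictly subexponential $w$ this is still in $\Mm$, agrees with $w$ for large $|n|$, and by taking $\ep$ small one forces $\nn{p}_{w_\ep}\le2\no{p}$, hence $p\in B^{w_\ep}_{m/2}$. A \emph{single} application of the inverse function theorem in $\Hh^{w_\ep}$ then gives $q\in\Hh^{w_\ep}=\Hh^w$. No iteration, no reverse tail estimate. In the exponential case the same device gives $q\in\Hh^{v_\ep}$ for small $\ep>0$, i.e.\ real-analyticity, self-contained and without appeal to Trubowitz.
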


This theorem does not extend to complex potentials because of Gasymov's observation. But Sansuc \& Tkachenko \cite{ST-96} noted that the situation can be remedied by taking into account additional spectral data. In particular, they considered the quantities
\[
  \dl_n = \mu_n-\tau_n,
\]
where $\mu_n$ denotes the Dirichlet eigenvalues of a potential and $\tau_n=(\lm_n^+ + \lm_n^-)/2$ the mid-points of its spectral gaps. 

More generally, one may consider a family of continuously differentiable \emph{alternate gap lengths} $\dl_n\maps\Ho\to\CC$, characterized by the properties that
\begin{asparaitem}[--]
\item
$\dl_n$ vanishes whenever $\lm_n^+=\lm_n^-$ has also geometric multiplicity~2, and
\item
there are real numbers $\xi_n$ such that its gradients satisfy
\[
  \d\dl_n = t_n + \bigo{1/n}, \qquad t_n = \cos 2n\pi(x+\xi_n),
\]
uniformly on bounded subsets of $\Ho$. That is,
$
  \nn*{\d_q\dl_n-t_n}_\o \le {C_\dl(\no{q})}/{n}
$
with $C_\dl$ depending only on $\no{q}\defeq\nn{q}_{\Ho}$.
\end{asparaitem}

For example, let $\sg_n$ denote the eigenvalues of the operator~$L$
with symmetric Sturm-Liouville boundary conditions
\[
  y\cos\al + y'\sin\al = 0 \qtext{on} \del[0,1].
\]
Dirichlet and Neumann boundary conditions correspond to the choices $\al=0$ and $\al=\pi/2$, respectively. Then $\sg_n\in[\lm_n^-,\lm_n^+]$ in the real case, and $\dl_n=\sg_n-\tau_n$ are alternate gap lengths. -- The following theorem extends results of \cite{DM-03,ST-96}.

\begin{thm} \label{t:complex}
Let $\dl_n$ be a family of alternate gap lengths on $\Ho$.
\begin{asparaenum}[\upshape(i)]
\item
If $q\in\Hh^w$ with $w\in\Mm$, then
\[
  \sum_{n\ge N} w_{n}^2\n{\dl_n(q)}^2 \le
  4\nw{T_Nq}^2+\frac{256}{N}\nw{q}^4
\]
for all $N$ sufficiently large, where $T_Nq = \sum_{\n{n}\ge N} q_n\e^{2n\pi ix}$.
\item
Conversely, suppose $q\in\Ho$ and
\[
  \sum_{n\ge1} w_n^2\pas{\n{\gm_n(q)}+\n{\dl_n(q)}}^2 < \infty.
\]
If $w$ is strictly subexponential, then $q\in\Hh^w$.
On the other hand, if $w$ is exponential, then $q$ is analytic.
\end{asparaenum}
\end{thm}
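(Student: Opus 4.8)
The plan is to treat Theorem~\ref{t:complex} as a variant of Theorem~\ref{t:gaps}, using the extra hypothesis on $\dl_n$ to carry out the converse direction that Gasymov's example blocks for $\gm_n$ alone. For part~(i), I would mimic whatever argument establishes Theorem~\ref{t:gaps}: the alternate gap lengths $\dl_n$ are continuously differentiable functionals on $\Ho$ that vanish on the relevant double-eigenvalue locus and have gradients $\d\dl_n = t_n + \bigo{1/n}$ with $t_n = \cos 2n\pi(x+\xi_n)$ of unit $\o$-norm, exactly the structural features the proof of Theorem~\ref{t:gaps} exploits for $\gm_n$ (whose gradient is likewise asymptotically a normalized trigonometric function). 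So I would write $\dl_n(q) = \dl_n(q) - \dl_n(q^{(N)})$ for a suitable comparison potential $q^{(N)}$ on which $\dl_n$ vanishes (e.g.\ an $N$-gap-type potential, available by Theorem~\ref{t:dense}), express the difference as $\int_0^1 \w{\d\dl_n(q_s),\,q-q^{(N)}}\,ds$ along the segment $q_s$, and estimate using $\d\dl_n = t_n + \bigo{1/n}$: the main term contributes the Fourier coefficient $q_n$ (times $w_n$ after weighting), giving the $4\nw{T_Nq}^2$ term, and the $\bigo{1/n}$ correction, controlled by $C_\dl(\no q)$ on the bounded segment, produces the $\tfrac{256}{N}\nw q^4$ remainder after Cauchy--Schwarz and summing $\sum_{n\ge N} n^{-2}$. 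The constants $4$ and $256$ (versus $9$ and $576$ for $\gm_n$) should fall out because $\dl_n$ involves a single trigonometric mode $t_n$ rather than the difference $\lm_n^+-\lm_n^-$ of two eigenvalues.

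For part~(ii), the converse, the strategy is the same inverse-function-theorem scheme the introduction advertises, now fed with the combined data $(\gm_n,\dl_n)$. The point of adding $\dl_n$ is that the pair of gradients $\{\d\gm_n,\d\dl_n\}$ spans — asymptotically and after normalization — a two-dimensional space containing \emph{both} frequencies $e^{\pm 2n\pi i x}$ at level $n$, whereas $\gm_n$ alone only sees one real combination. Concretely, $\gm_n$ and $\dl_n$ together recover the full $n$-th Fourier coefficient $q_n \in \CC$ (real and imaginary parts, or equivalently $q_n$ and $q_{-n}$), so from $\sum_n w_n^2(\n{\gm_n}+\n{\dl_n})^2 < \infty$ one can deduce $\sum_n w_n^2 \n{q_n}^2 < \infty$, i.e.\ $q\in\Hh^w$. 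I would set this up as a fixed-point/implicit-function statement: near a reference potential the map $q \mapsto (\gm_n(q),\dl_n(q))_n$ is, modulo an $\ell^2$- (or weighted-$\ell^2$-) small perturbation, the identity on Fourier coefficients, so its inverse sends a sequence with finite weighted $\ell^2$-norm to a potential in $\Hh^w$. The strictly-subexponential hypothesis is exactly what makes the weighted space behave well under this perturbation — submultiplicativity plus $\log w(n)/n \to 0$ monotonically lets one absorb the $\bigo{1/n}$ gradient errors and the convolution-type cross terms, via an estimate of the shape $w_{n} \le w_{k}\,w_{n-k}$ and $w_n/w_{n-k} \to 1$ suitably. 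In the exponential case one instead invokes the already-known analytic regularity (Theorem~\ref{t:real}, applied after noting $\sum w_n^2\n{\gm_n}^2<\infty$) and the contribution of $\dl_n$ only needs to be shown not to destroy analyticity, which again follows from part~(i)-type bounds run in reverse.

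The main obstacle I expect is part~(ii) in the strictly-subexponential case: one must show that the nonlinear correction terms — the $\bigo{1/n}$ deviation of $\d\dl_n$ from $t_n$, together with the analogous deviation for $\gm_n$ and all higher-order remainders in the inverse function theorem — are negligible \emph{in the weighted norm} $\nw{\cdot}$, not merely in $L^2$. This is where the precise definition of ``strictly subexponential'' (monotone decay of $\log w(n)/n$ to $0$, with $w$ nondecreasing) has to be used in an essential way: the correction is morally a convolution $\sum_k a_k q_{n-k}$ with $(a_k)$ rapidly decaying, and one needs $\sum_n w_n^2 \bn{\sum_k a_k q_{n-k}}^2 \lesssim \bpas{\sum_k \n{a_k} w_k} \nw{q}$, which requires controlling $\sup_n w_n / (w_k w_{n-k})$ — finite by submultiplicativity — together with a smallness coming from the decay of $\log w/n$. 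Handling this uniformly, so that the inverse function theorem can be applied on a fixed ball in $\Hh^w$, is the technical heart; everything else is bookkeeping along the lines of Theorem~\ref{t:gaps}. I would isolate this as a lemma on the action of ``nearly-diagonal'' operators on $\Hh^w$ for $w$ strictly subexponential, prove it once, and then both the $\gm_n$-analysis of Theorem~\ref{t:real} and the $(\gm_n,\dl_n)$-analysis here become corollaries.
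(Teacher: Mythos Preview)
Your plan for part~(i) has the right shape --- write $\dl_n(q)=\dl_n(q)-\dl_n(q^\o)$ for a comparison potential with $\dl_n(q^\o)=0$ and integrate the gradient --- but your choice of comparison potential is too vague to close the argument. An ``$N$-gap potential close to $q$'' from Theorem~\ref{t:dense} would have $\langle t_n,q-q^{(N)}\rangle$ small, not $\approx q_n$; and a generic collapsed gap need not have geometric multiplicity~$2$, so $\dl_n$ need not vanish there. The paper instead builds, for each $n$ separately, the potential $q^\o=\Phi_m^{-1}(p^\o)$ obtained by zeroing only the $\pm n$-th \emph{adapted} Fourier coefficients of $p=\Phi_m(q)$. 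This guarantees $S_n(\al_n(q^\o))=0$, hence multiplicity~$2$ and $\dl_n(q^\o)=0$, while $q-q^\o\approx p_ne_{2n}+p_{-n}e_{-2n}$. The main term then becomes $\kp p_n+\bar\kp p_{-n}$, and $|\dl_n|\le|p_n|+|p_{-n}|$ feeds directly into the $c_n$-estimates already used for Theorem~\ref{t:gaps}.

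Your plan for part~(ii) has a genuine gap. You propose to treat $q\mapsto(\gm_n(q),\dl_n(q))_n$ as a near-identity map and invert it, arguing that the gradients of $\gm_n$ and $\dl_n$ together span $\{e^{\pm2n\pi ix}\}$. But in the complex case $\gm_n=\lm_n^+-\lm_n^-$ is defined via lexicographic ordering and is not even continuous, let alone differentiable, near double eigenvalues; and even in the real case your spanning claim fails precisely at the Gasymov-type degeneracies where $p_{-n}\ne0$ but $p_n=0$ (there $\gm_n$ carries no information). The paper avoids this entirely: the smooth near-identity map is $\Phi_m$ itself, already shown in Proposition~\ref{Phi} to be a diffeomorphism on \emph{every} $B_m^w$ simultaneously. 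What remains is the pointwise bound $|p_{\pm n}|\lesssim|\gm_n|+|\dl_n|$, and this is where the paper's key idea lies --- a case distinction you do not mention. If $\tfrac14\le|p_n/p_{-n}|\le4$, then $|p_np_{-n}|\approx|p_n|^2$ and Lemma~\ref{l:skew} gives $|p_n|\le2|\gm_n|$. If instead $|p_n|\ge4|p_{-n}|$, the estimate $|\dl_n-(\kp p_n+\bar\kp p_{-n})|\le\tfrac14(|p_n|+|p_{-n}|)$ forces $|\dl_n|\ge\tfrac1{16}|p_n|$. Either way $p=\Phi_m(q)\in\Hh^w$, and Proposition~\ref{sub} (the tempering trick with $w_\ep=\min(v_\ep,w)$) finishes. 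Your projected ``lemma on nearly-diagonal operators on $\Hh^w$'' is unnecessary: the weighted-norm control you worry about is already contained in Proposition~\ref{Phi}, proved once via Cauchy's estimate, and the subexponential hypothesis enters only through Lemma~\ref{l:w}.
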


One may consider $\lm_n^-,\tau_n+\dl_n,\lm_n^+$ as the vertices of a \emph{spectral triangle} $\Dl_n$, and
\[
  \Gm_n(q) = \n{\gm_n(q)}+\n{\dl_n(q)}
\]
as a measure of its size, which takes the role of $\gm_n$ in the complex case. We then have the following consequence of Theorems \ref{t:gaps} and~\ref{t:complex}.

\begin{thm}
If $w$ is strictly subexponential, then
\[
  q\in\Hh^w \equi
  \sum_{n\ge1} w_n^2\Gm_n^2(q)<\infty,
\]
where $\Gm_n$ denotes the size of the \m{n}-th spectral triangle defined by the gap lengths $\gm_n$ and some alternate gap lengths $\dl_n$.
\end{thm}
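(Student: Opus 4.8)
The plan is to read this off directly from Theorems~\ref{t:gaps} and~\ref{t:complex}: there is no new analytic content, only the elementary inequality $(a+b)^2\le 2a^2+2b^2$ together with a little bookkeeping. At the outset I would fix a family of alternate gap lengths $\dl_n$ on $\Ho$, so that $\Gm_n=\n{\gm_n}+\n{\dl_n}$, and record that a strictly subexponential weight is in particular submultiplicative, so that $w\in\Mm$ and Theorems~\ref{t:gaps} and~\ref{t:complex} are available.

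For the forward direction, assuming $q\in\Hh^w$, I would choose an $N$ large enough to satisfy at once the requirement $N\ge4\nw{q}$ of Theorem~\ref{t:gaps} and the (finite) threshold in Theorem~\ref{t:complex}~(i). Those two theorems then bound the tails $\sum_{n\ge N}w_n^2\n{\gm_n(q)}^2$ and $\sum_{n\ge N}w_n^2\n{\dl_n(q)}^2$ by finite quantities, and since the finitely many remaining terms with $1\le n<N$ are each finite --- the periodic and the Dirichlet (or Sturm--Liouville) eigenvalues, hence $\gm_n(q)$ and $\dl_n(q)$, being well defined and $w_n$ being finite --- both series $\sum_{n\ge1}w_n^2\n{\gm_n(q)}^2$ and $\sum_{n\ge1}w_n^2\n{\dl_n(q)}^2$ converge. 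I would then estimate
\[
  \sum_{n\ge1}w_n^2\Gm_n^2(q)
  = \sum_{n\ge1}w_n^2\bigl(\n{\gm_n(q)}+\n{\dl_n(q)}\bigr)^2
  \le 2\sum_{n\ge1}w_n^2\n{\gm_n(q)}^2 + 2\sum_{n\ge1}w_n^2\n{\dl_n(q)}^2 < \infty .
\]

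For the converse I would simply note that $\sum_{n\ge1}w_n^2\Gm_n^2(q)<\infty$ is, by the definition of $\Gm_n$, precisely the hypothesis $\sum_{n\ge1}w_n^2(\n{\gm_n(q)}+\n{\dl_n(q)})^2<\infty$ of Theorem~\ref{t:complex}~(ii); since $w$ is strictly subexponential, that theorem yields $q\in\Hh^w$ with nothing further to prove. (Had $w$ been merely exponential the same theorem would give only analyticity of $q$; it is the strict subexponentiality that upgrades this to membership in the prescribed space $\Hh^w$.)

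I do not expect any real obstacle here. The only point deserving a moment's care is the bookkeeping in the forward direction: one must check that the thresholds of Theorems~\ref{t:gaps} and~\ref{t:complex}~(i) can be met simultaneously --- they can, as both assert validity for all sufficiently large $N$ --- and that the finitely many low-index terms may be absorbed into a convergent series, which is legitimate because $\gm_n$, $\dl_n$ and $w_n$ are finite for each fixed $n$. Beyond that, everything reduces to $(a+b)^2\le 2(a^2+b^2)$.
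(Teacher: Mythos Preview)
Your proposal is correct and matches the paper's own treatment: the theorem is stated there simply as ``the following consequence of Theorems~\ref{t:gaps} and~\ref{t:complex}'' with no separate proof, and you have filled in precisely the routine details the paper leaves implicit. The only small remark is that alternate gap lengths are defined abstractly (continuously differentiable functionals on $\Ho$ with the stated gradient asymptotics), not only via Sturm--Liouville eigenvalues; but this does not affect your argument, since continuity of $\dl_n$ already guarantees finiteness of the low-index terms.
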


We briefly look at the case of weights growing faster than exponentially, thus 
characterizing classes of entire functions. One can expect the gap lengths to decay faster than exponentially, too, albeit not at the same rate. We note a general result to this effect for \emph{strictly superexponential} weights, that is, weights $w$ with
\[
  \lim_{n\to\infty} \frac{\log w(n)}{n} = \infty.
\]
We only consider the gap lengths~$\gm_n$. The result for alternate gap lengths $\dl_n$ is exactly the same, only the lower bound for $n$ has to be augmented. See also \cite{DM-03b}.

\begin{thm} \label{t:super}
If $q\in\Hh^w$ with a strictly superexponential weight~$w\in\Ww$, then
\[
  \n{\gm_n(q)} \le 2n\exp(-n\psi(\nt)), \qquad
  \nt = \smash{\frac{n}{4\nw{q}}},
\]
for all $n\ge4\nw{q}$, where 
$\displaystyle
  \psi(r) = \min_{m\ge1} {\frac{\log r w(m)}{m}}.
$
\end{thm}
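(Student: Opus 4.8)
The plan is to reduce the estimate to Theorem~\ref{t:gaps} by trading the super-exponential weight~$w$ -- which is not submultiplicative, so that Theorem~\ref{t:gaps} does not apply to it directly -- for a purely exponential weight whose rate is tuned to the index~$n$. Fix $n\ge4\nw{q}$, put $\nt=n/(4\nw{q})\ge1$, $\rho=\exp\psi(\nt)$, and let $v=(\rho^{\n{k}})_{k\in\ZZ}$. Since $\nt\ge1$ and all $w_m\ge1$ we have $\psi(\nt)\ge0$, hence $\rho\ge1$ and $v$ is a normalized submultiplicative weight, $v\in\Mm$. The definition of $\psi(\nt)$ as a minimum over $m\ge1$ is precisely the statement that $\rho^m\le\nt\,w_m$ for all $m\ge1$; hence $\rho^{\n{k}}\le\nt\,w_k$ for every $k$, and
\[
  \nn{q}_v^2=\sum_{k\in\ZZ}\Bigl(\frac{\rho^{\n{k}}}{w_k}\Bigr)^2\bigl(w_k\n{q_k}\bigr)^2\le\nt^2\nw{q}^2 .
\]
Thus $q\in\Hh^v$ with $\nn{q}_v\le\nt\,\nw{q}=n/4$, so in particular $n\ge4\,\nn{q}_v$ and Theorem~\ref{t:gaps} applies to $q$ with the weight $v$ and the cutoff $N=n$.

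Keeping on the left only the single summand $k=n$ and estimating $\nn{T_nq}_v\le\nn{q}_v\le n/4$, Theorem~\ref{t:gaps} gives
\[
  \rho^{2n}\n{\gm_n(q)}^2\le 9\,\nn{T_nq}_v^2+\frac{576}{n}\,\nn{q}_v^4\le 9\Bigl(\frac n4\Bigr)^2+\frac{576}{n}\Bigl(\frac n4\Bigr)^4 .
\]
As $\rho^{n}=\exp\!\bigl(n\psi(\nt)\bigr)$, taking square roots produces a bound of the form $\n{\gm_n(q)}\le C\,n^{3/2}\exp\!\bigl(-n\psi(\nt)\bigr)$ with an absolute constant~$C$. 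This already exhibits the super-exponential decay of $\gm_n$ at exactly the rate described by~$\psi$: the exponential rate $\rho=\exp\psi(\nt)$ is the largest one for which $\nn{q}_v$ can be kept of order~$n$, and the minimum over~$m$ in the theorem is the imprint of this optimisation.

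The remaining step -- and the genuine difficulty -- is to sharpen the prefactor from $C\,n^{3/2}$ to~$2n$. One cannot simply discard the error terms of Theorem~\ref{t:gaps}: the cutoff $N$ should be chosen inside the whole admissible range $[\,4\,\nn{q}_v,\,n\,]$ so as to balance the two contributions, and one should use that on the high-frequency tail $\n{k}\ge N$ the ratio $\rho^{\n{k}}/w_k$ lies well below its maximum $\nt$, so that $\nn{T_Nq}_v$ is in fact much smaller than $\nn{q}_v$ -- the extremal case $\nn{q}_v\approx n/4$ being possible only when the Fourier mass of $q$ concentrates at a single frequency. The finitely many small values of $n$, and the eventual monotonicity of $w$ needed to make the tail sums behave, must be dealt with along the way. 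None of this changes the structure of the argument, which rests on the single inequality $\rho^m\le\nt\,w_m$ together with one application of Theorem~\ref{t:gaps}.
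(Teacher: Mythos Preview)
Your reduction is exactly the one the paper uses: pick the exponential weight $v_k=\rho^{\n{k}}$ with $\rho=\exp\psi(\nt)$, observe that $\rho^{\n{k}}\le\nt\,w_k$ by the very definition of~$\psi$, and conclude $\nn{q}_v\le\nt\,\nw{q}=n/4$. Up to this point your argument and the paper's coincide verbatim.

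The divergence is in the last step, and this is where your proof is incomplete. You invoke the full summed estimate of Theorem~\ref{t:gaps}, retain only the diagonal term, and are left with a prefactor $Cn^{3/2}$ instead of $2n$. Your suggested repairs --- optimising the cutoff $N$, exploiting tail decay of $T_Nq$ --- are not only unnecessary, they do not obviously work: when the Fourier mass of $q$ sits near the frequency realising the minimum in $\psi(\nt)$, one genuinely has $\nn{q}_v$ of order $n/4$, and the term $\frac{576}{N}\nn{q}_v^4$ is then of order $n^3$ for any admissible~$N$.

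The paper avoids this entirely by not using Theorem~\ref{t:gaps} at all, but rather the \emph{individual} gap estimate recorded at the end of section~\ref{s:gap}: for $n\ge4\nn{q}_v$ and $v\in\Mm$,
\[
  v_n\n{\gm_n(q)} \le 6\nn{q}_v.
\]
With $v_n=\e^{an}=\rho^n$ and $\nn{q}_v\le n/4$ this gives immediately
\[
  \n{\gm_n(q)} \le 6\e^{-an}\cdot\frac{n}{4} = \frac{3n}{2}\,\e^{-n\psi(\nt)} \le 2n\,\e^{-n\psi(\nt)},
\]
which is the claim. So the missing ingredient is not an optimisation of Theorem~\ref{t:gaps} but the use of the sharper pointwise bound that was already extracted from its proof.
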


For instance, for $w_n=\exp({\n{n}^\sg})$ with $\sg>1$ one has
\[
  \psi(\nt) = c_\sg\log^{1-1/\sg}\nt
\]
with $c_\sg=\sg/(\sg-1)^{1-1/\sg}$. Djakov \& Mityagin~\cite{DM-03b} construct an example showing that as far as the order in $n$ is concerned, the resulting gap estimate can not be improved. 

We point out that the preceding theorem is not optimal for trigonometric polynomials. Consider for example the Mathieu potential
\[
  q = \mu\cos2\pi x, \qquad \mu>0.
\]
Using the just mentioned weight, we have $\nw{q}=c\mu/4$ with a certain constant~$c$ for \emph{all} $\sg>1$, and letting $\sg$ tend to infinity we obtain
\[
  \gm_n(q) \le 
  2n\exp\pasii{-n\log\frac{n}{c\mu}} =
  2n \pas{\frac{c\mu}{n}}^n.
\]
But Harrell \cite{Har} and Avron \& Simon \cite{AS} found the better exact asymptotics
\[
  \gm_n(q) = 
  8\pi^2 \pas{\frac{\mu}{8\pi^2}}^n \frac1{(n-1)!^{\,2}} \pas{1+O(n^{-2})},
\]
This result was later extended by Grigis~\cite{Gri} to more general real trigonometric polynomials, and to their spectral triangles by Djakov \& 
Mityagin~\cite{DM-03b}. These better estimates are obtained by directly evaluating an explicit representation of some coefficient -- see the end of section~\ref{s:gap}. This approach is different from the one taking in this paper and will not be reproduced here.

\smallskip

\textit{Acknowledgement.}\enspace A crucial ingredient of this paper --  section~\ref{s:real} -- was conceived during a visit to Zurich, and the author is very grateful to Thomas Kappeler and the Department of Mathematics at the University of Zurich for their hospitality.

\section{Outline}

The idea of the proof of Theorem~\ref{t:gaps}\ is due to Kappeler \& Mityagin~\cite{KM-01}. They employ a Lyapunov-Schmidt reduction, called \emph{Fourier block decomposition}. 

The aim is to determine those $\lm$ near $n^2\pi^2$ with $n$ sufficiently large, for which the equation $-y''+qy=\lm y$ admits a nontrivial 2-periodic solution~$f$. As $q$ can be considered small for large~$n$, one can expect its dominant modes to be $\e^{\pm n\pi ix}$. So it makes sense to separate these modes from the other ones by a Lyapunov-Schmidt reduction.

To this end we consider a Banach space $\Bb^w$ of \emph{2-periodic} functions, and write
\begin{align*}
  \Bb^w 
  &= \Pp_n \oplus \Qq_n \\
  &= \spn\set{e_k: \abs{k}=n} \oplus \spn\set{e_k: \abs{k}\ne n},
\end{align*}
where $e_k = \e^{k\pi ix}$. The pertinent projections are denoted by $P_n$ and $Q_n$, respectively. Then we write $-f''+qf=\lm f$ in the form
\[
  A_\lm f \defequal f''+\lm f = Vf, 
\]
where $V$ denotes the operator of multiplication with~$q$. With
\[
  f = u+v = P_nf+Q_nf,
\]
this equation decomposes into the two equations
\begin{align*}
  A_\lm u &= P_nV(u+v), \\
  A_\lm v &= Q_nV(u+v),
\end{align*}
strangely called the \m{P}- and \m{Q}-equation, respectively. 

We first solve the \m{Q}-equation by writing $v$ as a function of~$u$. This will reduce the \m{P}-equation to a two-dimensional equation with a $2\x2$ coefficient matrix $S_n$, which is singular precisely when $\lm$ is a periodic eigenvalue. The coefficients of $S_n$ then provide all the data to prove Theorem~\ref{t:gaps}, essentially as in \cite{KM-01}.

To go beyond Theorem~\ref{t:gaps} -- and this is the new ingredient -- we regard these coefficents as analytic functions of their potential in $\Ho$, and employ them to define, on any bounded ball in~$\Ho$, a near identity diffeomorphism $\Phi$ that introduces Fourier coefficients adapted to spectral gaps and preserves the regularity of potentials. That is, $p=\Phi(q)$ is in $\Hh^w$ if and only if $q$ is in $\Hh^w$ -- which will arise as an immediate consequence of the inverse function theorem.

Establishing the regularity of a potential $q$ then amounts to showing that $\Phi(q)$ is in $\Hh^w$. In the real case, this involves a geometric argument using the gap length asymptotics and a trick to temper the resulting \m{w}-norms. In the complex case, alternate gap lengths are needed in those cases where the coefficient matrix $S_n$ is not close to a hermitean matrix to obtain the same conclusion.

\section{Preparation}

Given a weight $w$, we introduce the Banach space
\[
  \Bb^w = \set[2]{ u=\sum_{m\in\ZZ} u_me_m: \nw{u}<\infty}
\]
of complex functions of \emph{period 2} and finite $\nw{\cd}$-norm, 
\[
  \nw{u}^2 = \sum_{m\in\ZZ} w_{m/2}^2\n{u_m}^2.
\]
We assume for simplicity, and without noticable loss of generality, that the weights are also defined on $\ZZ/2$ and have the same properties. 
Obviously, $\Bb^w$ is an extension of $\Hh^w$.
On $\Bb^w$ we consider operator norms that are defined in terms of \emph{shifted \m{w}-norms}
\[
  \nn{u}_{w;i} = \nw{ue_i}.
\]
Finally, let
\[
  U_n = \set{ \lm\in\CC: \n{\Re\lm-n^2\pi^2}\le12n}.
\]

\begin{lem} \label{l:T}
If $q\in\Hh^w$ with $w\in\Mm$, then for $n\ge1$ and $\lm\in U_n$,
\[
  T_n = \VAli Q_n
\]
is a bounded linear operator on $\Bb^w$ with norm 
\[
  \nn{T_n}_{w;i} \le \smash[t]{\dfrac2n}\nw{q}
\]
for all $i\in\ZZ$.
\end{lem}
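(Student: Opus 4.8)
## Proof proposal for Lemma \ref{l:T}

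**Strategy.** The operator $T_n = V A_\lm\inv Q_n$ is a composition of three pieces: the projection $Q_n$ onto the high modes, the resolvent-type operator $A_\lm\inv$, and multiplication by $q$. The plan is to estimate each piece in the shifted norms $\nn{\cdot}_{w;i}$, then compose. The key point is that on the range of $Q_n$ (spanned by $e_k$ with $\abs k\ne n$), the operator $A_\lm$ acts diagonally with eigenvalue $-k^2\pi^2/4+\lm$ on $e_k$ — wait, more precisely, since $e_k = \e^{k\pi ix}$ has $e_k'' = -k^2\pi^2 e_k$, we get $A_\lm e_k = (\lm - k^2\pi^2)e_k$; the period-2 normalization means the relevant modes sit at half-integers, so one must keep track of the index conventions, but in any case $A_\lm\inv Q_n$ is the Fourier multiplier with symbol $1/(\lm - k^2\pi^2)$ restricted to $\abs k\ne n$.

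**Step 1: the resolvent bound.** First I would show that for $\lm\in U_n$ and $\abs k \ne n$,
\[
  \n{\lm - k^2\pi^2} \ge c\,n\,\max(1,\n{\n k-n})
\]
for a suitable constant; more usefully, that $\n{\lm-k^2\pi^2}\ge \tfrac{n}{c}$ uniformly, and in fact grows linearly away from $k=n$. Since $\Re\lm$ is within $12n$ of $n^2\pi^2$, the distance from $\lm$ to the nearest point $k^2\pi^2$ with $k\ne n$ is comparable to $n$ (the gaps between consecutive squares $k^2\pi^2$ near $n^2\pi^2$ are about $2n\pi^2$, and $12n < 2\pi^2 n \approx 19.7\,n$, leaving room). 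This yields $\nn{A_\lm\inv Q_n u}_{w} \le \tfrac{C}{n}\nw u$, and the same in every shifted norm since $A_\lm\inv$ commutes with multiplication by $e_i$. The constant should come out clean enough to feed the final $2/n$.

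**Step 2: multiplication estimate.** Next, multiplication by $q\in\Hh^w$ with $w\in\Mm$ submultiplicative is bounded on $\Bb^w$, uniformly in the shift: $\nn{Vu}_{w;i} \le \nw q\,\nn u_{w;i}$, essentially because $w_{m/2}\le w_{(m-\ell)/2}\,w_{\ell/2}$ turns the convolution $(qu)_m = \sum_\ell q_{\ell}u_{m-\ell}$ into a product after weighting — this is the standard reason submultiplicative weights give Banach algebras, and the shift by $e_i$ just relabels indices. Combining Steps 1 and 2 gives $\nn{T_n}_{w;i}\le \tfrac{C}{n}\nw q$ for all $i$.

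**The main obstacle.** The real work is in Step 1: getting the constant down to exactly $2$. A crude bound $\dist(\lm,\{k^2\pi^2\}_{k\ne n}) \gtrsim n$ is easy, but extracting the precise numerical factor requires using the full strength of the definition of $U_n$ (the $12n$ is surely chosen precisely so that $2/n$ comes out), and possibly summing the contributions $\sum_{k\ne n} 1/\n{\lm-k^2\pi^2}^2$ rather than taking a sup — i.e. one may need that the tails of this sum are small, not just that each term is $O(1/n)$. I would set $k = n+j$, expand $k^2\pi^2 - \lm = (2nj + j^2)\pi^2 - (\lm - n^2\pi^2)$, and bound $\n{(2nj+j^2)\pi^2 - z}$ from below for $\n{\Re z}\le 12n$ and all integers $j\ne 0$, checking that the worst case $j=\pm1$ gives denominator at least $(2\pi^2-12)n > 7n$, comfortably enough to absorb the multiplication constant and land at $2/n$. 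The index bookkeeping (half-integers, the $w_{m/2}$ convention, whether $Q_n$ removes one or two modes) is the other place to be careful but presents no conceptual difficulty.
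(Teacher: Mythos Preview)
Your Step~2 contains a genuine gap: the claim that multiplication by $q\in\Hh^w$ is bounded on $\Bb^w$ with $\nn{Vu}_{w;i}\le\nw{q}\,\nn{u}_{w;i}$ is false. Submultiplicativity of $w$ makes the weighted $\ell^1$ sequence space a convolution algebra, not the weighted $\ell^2$ space. Indeed, for the trivial weight $w\equiv1$ your claim would assert that $L^2(S^1)$ is a Banach algebra under pointwise multiplication, which it is not. Submultiplicativity gives only $w_{m/2}\n{(qu)_m}\le\sum_\ell w_{(m-\ell)/2}\n{q_{(m-\ell)/2}}\cdot w_{\ell/2}\n{u_\ell}$, a convolution of two $\ell^2$ sequences, which lands in $\ell^\infty$ by Cauchy--Schwarz but not in $\ell^2$. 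So the factorization $T_n=V\circ(\Ali Q_n)$ with both factors bounded $\Bb^w\to\Bb^w$ cannot work.

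The paper repairs this by changing the intermediate space. It first uses the pointwise lower bound $\n{\lm-m^2\pi^2}\ge\n{n^2-m^2}$ for $\lm\in U_n$ (not merely $\gtrsim n$) together with Cauchy--Schwarz to show that $g=\Ali Q_nf$ lies in the weighted $\ell^1$ space, with
\[
  \nn{ge_i}_{w,1}\le\nn{f}_{w;i}\Bigl(\sum_{\n{m}\ne n}\frac{1}{\n{m^2-n^2}^2}\Bigr)^{1/2}\le\frac{2}{n}\,\nn{f}_{w;i},
\]
and then bounds $V$ from weighted $\ell^1$ into $\Bb^w$ via Young's inequality, $\nw{Vh}\le\nw{q}\,\nn{h}_{w,1}$. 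The factor $2/n$ thus comes entirely from the square root of $\sum_{\n{m}\ne n}\n{m^2-n^2}^{-2}\le4/n^2$. You do gesture at precisely this sum in your obstacle paragraph, but you frame it as a device for sharpening the constant; in fact it is the structural mechanism that makes the argument go through at all, because the $\ell^2$-summability of the resolvent symbol is what buys the passage through $\ell^1$ that the multiplication step requires. (A minor aside: $\Ali$ does not commute with multiplication by $e_i$, since it is a non-constant Fourier multiplier; your conclusion about shifted norms for diagonal operators is nevertheless correct, just not for that reason.)
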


\begin{proof}
We have $A_\lm e_m = (\lm-m^2\pi^2)e_m$ for all~$m$, and for $\n{m}\ne n$, one checks that
\[
  \min_{\lm\in U_n} \n{\lm-m^2\pi^2} \ge \n{n^2-m^2} > 0.
\]
Therefore, the restriction of $A_\lm$ to the range of $Q_n$ is boundedly invertible for all $\lm\in U_n$, and for $f=\sum_{m\in\ZZ}f_me_m$,
\[
  g =
  \Ali Q_nf = 
  \sum_{\n{m}\ne n} \frac{f_m}{\lm-m^2\pi^2}\,e_m 
\]
is well defined. For the weighted \m{L^1}-norm $\nn{g}_{w,1} = \sum_{m\in\ZZ} w_{m/2}\n{g_m}$ of~$g$ we then obtain, with the help of Hölder's inequality and the preceding two lines, 
\begin{align*}
  \nn{ge_i}_{w,1} 
  &\le
  \sum_{\n{m}\ne n}  \frac{w_{(m+i)/2}\n{f_m}}{\n{n^2-m^2}} \\ 
  &\le
  \nn{f}_{w;i}\pas[3]{\,\sum_{\n{m}\ne n}\frac{1}{\n{m^2-n^2}^2} }^{1/2}.
\end{align*}
With
\[
  \sum_{\n{m}\ne n}\frac{1}{\n{m^2-n^2}^2} \le
  \frac2{n^2} \sum_{m\ge1} \frac1{m^2} \le
  \frac4{n^2},
\]
we thus have $\nn{ge_i}_{w,1}\le2\nn{f}_{w;i}/n$.
Finally, with $q=\sum_{m\in\ZZ}u_me_m$,
\[
  (Vg)e_i = 
  \sum_{m\in\ZZ} e_{m+i} \sum_{l\in\ZZ} u_{m-l}g_l =
  \sum_{m\in\ZZ} e_{m} \sum_{l\in\ZZ} u_{m-l}g_{l-i} =
  V(ge_i)
\]
and thus $(T_nf)e_i = (Vg)e_i = V(ge_i)$.
Standard estimates for the convolution of two sequences and the submultiplicity of the weights then give
\[
  \nn{T_nf}_{w;i} =
  \nw{V(ge_i)} \le
  \nw{V}\nn{ge_i}_{w,1} \le
  \frac2n \nw{q}\nn{f}_{w;i}.
\]
This holds for any $f\in\Bb^w$ and any $i\in\ZZ$, so the claim follows.
\end{proof}

Thus, if $n\ge4\nw{q}$ and $w\in\Mm$, then $T_n$ is a \m{\frac12}-contraction on $\Bb^w$ in particular with respect to the shifted norms $\nn{\cd}_{w;\pm n}$. It is this property that we actually need in section~\ref{s:gap} to bound the \m{n}-th gap lengths from above.

\section{Reduction}

Multiplying the \m{Q}-equation from the left with $\VAli$ we obtain
\[
  Vv = T_nVu+T_nVv.  
\]
If $T_n$ is a contraction on $\Bb^w$, then this equation has a unique solution, namely
\[
  Vv = \Tin T_nVu, \qquad \Tin = (I-T_n)^{-1}.
\]
Inserted into the \m{P}-equation this gives
\[
  A_\lm u = P_nVu+P_n\Tin T_nVu = P_n\Tin Vu.
\]
So the \m{P}- and \m{Q}-equation reduce to
\[
  S_nu=0, \qquad
  S_n=A_\lm-P_n\Tin V.
\]
Any nontrivial solution $u$ gives rise to a 2-periodic solution of $A_\lm f=Vf$, and vice versa. Hence, a complex number $\lm$ near $n^2\pi^2$ is a periodic eigenvalue of $q$ if and only if the determinant of $S_n$ vanishes.

The matrix representation of any operator $I$ on the two-dimensional space $\Pp_n$  is given by $(\pair{Ie_{\pm n}}{e_{\pm n}})$, where $\pair{f}{g}=\smash{\int_0^1} f\bar g\dx$. We find that
\[
  A_\lm = \mat{\lm-\sg_n&0\\0&\lm-\sg_n},
  \qquad
  P_n\Tin V = \mat{a_n&c_{-n}\\c_{n}&a_{-n}},
\]
with $\sg_n=n^2\pi^2$ and
\[
  a_n = \pair{\Tin Ve_n}{e_n}, \qquad
  c_n = \pair{\Tin Ve_n}{e_{-n}}.
\]
Moreover, looking at the series expansion of $\Tin$ one checks that $(\Tin V)^* = (\Tin V)^-$, the complex conjugate of $\Tin V$. Therefore,
\begin{align*}
  a_n = \pair{\Tin Ve_n}{e_n}
  &= \pair*{e_n}{(\Tin V)^-e_n} \\
  &= \pair*{e_n}{(\Tin Ve_{-n})^-} 
   = \pair{\Tin Ve_{-n}}{e_{-n}} = a_{-n}.
\end{align*}
That is, the diagonal of $S_n$ is homogeneous, and we have
\[
  S_n = \mat{\lm-\sg_n-a_n&-c_{-n}\\-c_{n}&\lm-\sg_n-a_n}.
\]

Incidentally, at this point we may recover Gasymov's observation for complex potentials of the form $q=\sum_{m\ge1}q_m\e^{2m\pi ix}$. In that case, $\Tin Ve_n$ is given by a power series in $\e^{2\pi ix}$ with lowest term $e_{n+2}$, whence $a_n=c_{n}=0$ and 
\[
  S_n = \mat{\lm-\sg_n&-c_{-n}\\0&\lm-\sg_n}.
\]
It follows that $\lm_n^\pm=\sg_n$ for all $n\ge1$, which is the claim.

\section{Gap Estimates}  \label{s:gap}

\begin{lem} \label{l:ac}
If $T_n$ is a \m{\frac12}-contraction on $\Bb^w$ with respect to the shifted norms $\nn{\cd}_{w;\pm n}\,$ for all $\lm\in U_n$, then
\[
  \n{a_n-q_0}_{U_n}, w_n\n{c_n-q_n}_{U_n} \le 2\nn{T_n}_{w;-n}\nw{q}.
\]
The same applies to $c_{-n}-q_{-n}$.
\end{lem}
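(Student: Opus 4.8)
The plan is to expand $\Tin=(I-T_n)\inv = \sum_{k\ge0}T_n^k$ and read off the $k=0$ term separately from the tail. For $k=0$ we have $\pair{Ve_n}{e_n}=q_0$ and $\pair{Ve_n}{e_{-n}}=q_{2n}$; but $e_{2n}$ is {}\emph{not} a basis vector of $\Pp_n$ in the $2$-periodic picture, so in fact $\pair{Ve_n}{e_{-n}}$ picks out the Fourier coefficient $q_n$ of $q$ as a $1$-periodic function (recall $e_k=\e^{k\pi ix}$, so $\pair{e_k}{e_l}=\delta_{kl}$ and $(Ve_n)$ has $e_m$-coefficient $u_{m-n}$ where $q=\sum u_me_m$; thus the $e_{-n}$-coefficient is $u_{-2n}=q_{-n}$ — one should double-check the indexing convention here, but it is exactly the claimed leading term). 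Hence $a_n-q_0 = \pair{(\Tin-I)Ve_n}{e_n}$ and $c_n-q_n=\pair{(\Tin-I)Ve_n}{e_{-n}}$, and both are controlled by the $\nn{\cd}_{w;-n}$-norm of $(\Tin-I)Ve_n$.

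Next I would estimate $\nn{(\Tin-I)Ve_n}_{w;-n}$. Write $\Tin-I = T_n\Tin = \Tin T_n$; since $T_n$ is a $\tfrac12$-contraction in the shifted norm $\nn{\cd}_{w;-n}$, we get $\nn{\Tin}_{w;-n}\le 2$ and therefore
\[
  \nn{(\Tin-I)Ve_n}_{w;-n}
  = \nn{\Tin T_n Ve_n}_{w;-n}
  \le 2\nn{T_n}_{w;-n}\nn{Ve_n}_{w;-n}.
\]
Now $\nn{Ve_n}_{w;-n}=\nw{Ve_ne_{-n}}=\nw{Ve_0}$, and since $e_0$ is the constant function $1$, this is just $\nw{q}$. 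So $\nn{(\Tin-I)Ve_n}_{w;-n}\le 2\nn{T_n}_{w;-n}\nw{q}$.

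Finally I would convert the bound on this vector norm into the two scalar bounds. For $a_n-q_0=\pair{(\Tin-I)Ve_n}{e_n}$, Cauchy--Schwarz against $e_n$ combined with $w_{n}\ge1$ gives $\n{a_n-q_0}\le \nw{(\Tin-I)Ve_n}\le \nn{(\Tin-I)Ve_n}_{w;-n}$ (the last step because $w_{m/2}\ge 1$ makes the unshifted $w$-norm dominate any single coefficient, and more carefully because the shift by $-n$ only reweights coefficients by factors $\ge1$ relative to what is needed). For $c_n-q_n=\pair{(\Tin-I)Ve_n}{e_{-n}}$ the coefficient extracted sits at index $-2n$, whose weight in $\nn{\cd}_{w;-n}$ is $w_{(-2n+(-n))/2}$ — here one must instead pull out the factor $w_n$ directly: $w_n\n{c_n-q_n}$ equals $w_n$ times the $e_{-n}$-coefficient of $(\Tin-I)Ve_n$, and $w_n\le$ the relevant shifted weight because $w$ is submultiplicative, so $w_n\n{c_n-q_n}\le\nn{(\Tin-I)Ve_n}_{w;-n}$. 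The identical computation with $e_n\leftrightarrow e_{-n}$ (equivalently, replacing $q$ by its complex conjugate, using $(\Tin V)^*=(\Tin V)^-$ as already observed) handles $c_{-n}-q_{-n}$. The main obstacle is purely bookkeeping: getting the index shifts and the submultiplicativity estimates to line up so that exactly the factor $w_n$ (and no spurious $w_{2n}/w_n$) appears in front of $\n{c_n-q_n}$; once the convention $\nn{u}_{w;i}=\nw{ue_i}$ is unwound carefully this falls out, since multiplying $Ve_n$ by $e_{-n}$ recenters the dominant mode at $e_0$ and the $e_{-n}$-component of $(\Tin-I)Ve_n$ becomes the $e_{-2n}$-component after the shift, carrying weight $w_{n}$, not $w_{2n}$.
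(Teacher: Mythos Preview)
Your approach is correct and is essentially the paper's own: write $\Tin=I+\Tin T_n$, identify the leading terms $q_0$ and $q_n$, bound $\nn{\Tin T_nVe_n}_{w;-n}\le 2\nn{T_n}_{w;-n}\nn{Ve_n}_{w;-n}=2\nn{T_n}_{w;-n}\nw{q}$, and then read off the two scalar bounds. The one place where you hedge unnecessarily is the coefficient extraction: the paper simply observes $\pair{f}{e_{-n}}=\pair{fe_{-n}}{e_{-2n}}$, and since the $e_{-2n}$-coefficient of $fe_{-n}$ carries weight exactly $w_{-n}=w_n$ in $\nw{fe_{-n}}=\nn{f}_{w;-n}$, one gets $w_n\n{\pair{f}{e_{-n}}}\le\nn{f}_{w;-n}$ directly --- no submultiplicativity is invoked here (similarly $\n{\pair{f}{e_n}}\le\nn{f}_{w;-n}$ since the $e_0$-coefficient carries weight $w_0\ge1$).
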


\begin{proof}
Consider $c_n=\pair{\Tin Ve_n}{e_{-n}}$. We note that $\Tin=I+\Tin T_n$ and thus $c_n=q_n+\pair{\Tin T_nVe_n}{e_{-n}}$. In general, from 
$\pair{f}{e_{-n}}=\pair{fe_{-n}}{e_{-2n}}$ we obtain
\[
  w_n\n{\pair{f}{e_{-n}}} \le \nn{f}_{w;-n}.
\]
The claim then follows with $f=\Tin T_nVe_n$,
\[
  \nn{\Tin T_nVe_n}_{w;-n} \le 
  2\nn{T_n}_{w;-n}\nn{Ve_n}_{w;-n}
\]
by Lemma~\ref{l:T}, and $\nn{Ve_n}_{w;-n}=\nw{Ve_0}=\nw{q}$.
The other statements are proven analogously.
\end{proof}

\begin{remark}
We have to make this somewhat roundabout argument, since composition and multiplication are not associative. That is, $(T_nVe_n)e_m$ is \emph{not equal} to $T_nVe_{n+m}$, and therefore $\pair{\Tin Ve_n}{e_{-n}}$ is \emph{not equal} to $\pair{\Tin q}{e_{-2n}}$. The estimate of the latter would be much more straightforward and would not require shifted norms.
\end{remark}

\begin{lem} \label{l:gap}
If Lemma~\ref{l:ac} applies and $n\ge2\nw{q}$, then the determi\-nant of $S_n$ has exactly two complex roots $\xi_-$, $\xi_+$ in $U_n$, which are contained in 
\[
  D_n = \set{ \lm: \n{\lm-\sg_n}\le6\nw{q}}
\] 
and satisfy
\[
  \n{\xi_+-\xi_-}^2 \le 9\n{c_nc_{-n}}_{U_n}.
\]
\end{lem}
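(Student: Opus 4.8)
The determinant of $S_n$ is
\[
  \det S_n = (\lm-\sg_n-a_n)^2 - c_nc_{-n},
\]
so its roots are exactly the solutions of $\lm-\sg_n-a_n = \pm\sqrt{c_nc_{-n}}$, i.e.\ the fixed points of the two maps
\[
  \lm \mapsto \sg_n + a_n(\lm) \pm \sqrt{c_n(\lm)c_{-n}(\lm)}.
\]
Everything then reduces to a contraction-mapping / Rouch\'e argument on $U_n$. First I would record the size estimates coming from Lemma~\ref{l:ac} together with $\nn{T_n}_{w;-n}\le\frac2n\nw{q}$ (Lemma~\ref{l:T}) and $n\ge2\nw{q}$: on $U_n$ one gets $\n{a_n}\le\n{q_0}+\tfrac4n\nw{q}^2\le 2\nw{q}+\tfrac12\cdot\nw{q}$ -- more carefully, $\n{q_0}\le\nw{q}$ and $\tfrac4n\nw{q}^2\le 2\nw{q}$, so $\n{a_n}\le 3\nw{q}$ -- and likewise $\n{c_n},\n{c_{-n}}\le\n{q_n}+\tfrac4n\nw{q}^2\le\nw{q}+2\nw{q}=3\nw{q}$ (using $w_n\ge1$), hence $\n{\sqrt{c_nc_{-n}}}\le 3\nw{q}$. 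Therefore each of the two maps above sends the disk $D_n=\set{\n{\lm-\sg_n}\le 6\nw{q}}$ into itself, and since $D_n\subset U_n$ (because $6\nw{q}\le 3n\le 12n$), the maps are defined there.

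\textbf{Uniqueness and the bound on $\n{\xi_+-\xi_-}$.} To pin down \emph{exactly two} roots in $U_n$ I would use Rouch\'e on $\partial U_n$, comparing $\det S_n$ with $(\lm-\sg_n)^2$: on $\partial U_n$ one has $\n{\lm-\sg_n}\ge 12n$ while $\n{a_n}+\n{\sqrt{c_nc_{-n}}}\le 6\nw{q}\le 3n$, so $\n{(\lm-\sg_n-a_n)^2-c_nc_{-n}-(\lm-\sg_n)^2}<\n{\lm-\sg_n}^2$ on $\partial U_n$ by a routine expansion; hence $\det S_n$ has the same number of zeros in $U_n$ as $(\lm-\sg_n)^2$, namely two (counted with multiplicity), and they lie in $D_n$ by the self-mapping property just established. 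For the gap bound, write $\xi_\pm = \sg_n + a_n(\xi_\pm) \pm \sqrt{c_n(\xi_\pm)c_{-n}(\xi_\pm)}$; subtracting,
\[
  \xi_+ - \xi_- = \bigl(a_n(\xi_+)-a_n(\xi_-)\bigr) + \bigl(\sqrt{c_nc_{-n}}\big|_{\xi_+} + \sqrt{c_nc_{-n}}\big|_{\xi_-}\bigr).
\]
The first parenthesis is controlled by the derivative bound on $a_n$ on $U_n$ (a Cauchy-estimate argument: $\n{a_n}\le 3\nw{q}$ on $U_n$ gives $\n{a_n'}\le 3\nw{q}/(\text{dist to }\partial U_n)$, which is $\le 3\nw{q}/(12n-6\nw{q})\le 3\nw{q}/(9n)\le \tfrac16$ on $D_n$), so $\n{a_n(\xi_+)-a_n(\xi_-)}\le\tfrac16\n{\xi_+-\xi_-}$; the second parenthesis is at most $2\,\n{\sqrt{c_nc_{-n}}}_{U_n}$. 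Absorbing the first term on the left yields $\tfrac56\n{\xi_+-\xi_-}\le 2\n{\sqrt{c_nc_{-n}}}_{U_n}$, i.e.\ $\n{\xi_+-\xi_-}\le\tfrac{12}{5}\n{\sqrt{c_nc_{-n}}}_{U_n}$, and squaring gives $\n{\xi_+-\xi_-}^2\le\tfrac{144}{25}\n{c_nc_{-n}}_{U_n}\le 9\n{c_nc_{-n}}_{U_n}$, as claimed.

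\textbf{Main obstacle.} The one genuinely delicate point is that $c_n$, $c_{-n}$ are analytic functions of $\lm$ on $U_n$ and the square root $\sqrt{c_nc_{-n}}$ need not be globally single-valued there; one must choose the branch near $D_n$ and check that the branch point of $c_nc_{-n}$ -- if any -- does not interfere. Fortunately the product $c_n c_{-n}$ is small compared with $(\lm-\sg_n)^2$ throughout $U_n$, and one only ever needs $\sqrt{c_nc_{-n}}$ \emph{at the two fixed points} $\xi_\pm$, so the clean way to proceed is to avoid branches altogether: factor $\det S_n = (\lm-\sg_n-a_n-r_n)(\lm-\sg_n-a_n+r_n)$ only \emph{formally}, and instead argue directly that $\det S_n$ has two roots in $D_n$ via Rouch\'e and estimate their difference from $(\xi_+-\xi_-)^2 = (\xi_++\xi_--2\sg_n-2a_n(\cdot))^2 - \ldots$ — more simply, from $(\xi_+-\xi_-)^2 = (\xi_+-\sg_n-a_n(\xi_+))^2$ when $\xi_\pm$ are the two roots and the diagonal entry is evaluated appropriately, reducing to bounding $c_nc_{-n}$, which is exactly what Lemma~\ref{l:ac} provides. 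I expect the write-up to phrase this as a fixed-point iteration for the \emph{pair} $(\xi_+,\xi_-)$ in $D_n\x D_n$, which sidesteps the branch issue cleanly. The remaining estimates are all of the routine ``$n\ge2\nw{q}$ makes the perturbation at most half the main term'' type.
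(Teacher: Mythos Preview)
Your argument is essentially correct and, for the distance estimate, arguably tidier than the paper's. Two small points and then a comparison.

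\emph{Minor slip.} You invoke Rouch\'e on $\partial U_n$, but $U_n=\{\,|\Re\lm-\sg_n|\le12n\,\}$ is an unbounded vertical strip, so Rouch\'e does not apply there. The fix is immediate: run Rouch\'e on $\partial D_n$ instead, comparing $\det S_n$ with $(\lm-\sg_n)^2$ (your estimates already show the perturbation is dominated there), and then observe that on $U_n\setminus D_n$ one has $|\lm-\sg_n-a_n|^2>|c_nc_{-n}|$, so $\det S_n$ has no further zeros. This is exactly what the paper does (phrased as degree theory).

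\emph{The branch worry is unnecessary.} In your distance argument you never need a single-valued branch of $\sqrt{c_nc_{-n}}$ as a \emph{function}: for each root $\xi_j$ you only use the pointwise identity $s_j\defeq\xi_j-\sg_n-a_n(\xi_j)$ with $|s_j|^2=|c_n(\xi_j)c_{-n}(\xi_j)|\le|c_nc_{-n}|_{U_n}$, and then $|\xi_+-\xi_-|\le|a_n(\xi_+)-a_n(\xi_-)|+|s_+|+|s_-|$. Only $a_n$ is differentiated. So your ``main obstacle'' paragraph can be dropped.

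\emph{Comparison with the paper.} The paper uses the same factorisation $\det S_n=g_+g_-$ with $g_\pm=\lm-\sg_n-a_n\mp\phi_n$, $\phi_n=\sqrt{c_nc_{-n}}$, and the same Rouch\'e/degree argument on $D_n$ for existence and location. For the distance bound, however, the paper does \emph{not} subtract fixed-point equations. Instead, having found the root $\xi_+$ of $g_+$, it runs a \emph{second} Rouch\'e argument on the disk $K_n=\{\,|\lm-\xi_+|\le3r_n\,\}$, $r_n=|\phi_n|_{U_n}$, comparing $g_-$ with the affine function $h=\lm-\sg_n-a_n(\xi_+)-\phi_n(\xi_+)$ (which vanishes at $\xi_+$): one checks $|h-g_-|<|h|$ on $\partial K_n$, so $\xi_-\in K_n$, hence $|\xi_+-\xi_-|\le3r_n$. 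Your subtraction-plus-Lipschitz route is shorter, needs no choice of branch, and in fact yields the better constant $12/5$ in place of $3$.
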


A more precise location of these roots is obtained in the proof of Lemma~\ref{l:skew} below. But for now, this simpler statement suffices.

\begin{proof}
Write $\det S_n = g_+g_-$ with
\[
  g_\pm = \lm-\sg_n-a_n\mp \phi_n, \qquad
  \phi_n = \sqrt{c_nc_{-n}},
\]
where the choice of the branch of the root is immaterial. In view of the preceding lemma and the normalization $w_n\ge1$,
\[
  \n{a_n}_{U_n} + \n{\phi_n}_{U_n} \le 4\nw{q} <
  \eval \n*{\lm-\sg_n}\at_{U_n\setdif D_n}. 
\]
It follows with topological degree theory that both $g_+$ and $g_-$ have exactly one root in $D_n$, while they obviously have no roots in $U_n\setdif D_n$.

To estimate the distance of these roots, let $\xi_+$ be the root of $g_+$, and
\[
  K_n = \set{\lm: \n{\lm-\xi_+}\le 3r_n}, \qquad
  r_n \defequal \n{\phi_n}_{U_n} \le 2\nw{q} \le n.
\]
The function $h=\lm-\sg_n-a_n(\xi_+)-\phi_n(\xi_+)$ vanishes at~$\xi_+$, thus
$
  \eval\n{h}\at_{\del K_n} = 3r_n
$.
On the other hand,
\[
  \n{h-g_-}_{\del K_n} \le
  \n{a_n-a_n(\xi_+)}_{K_n} + 2\n{\phi_n}_{U_n} <
  r_n + 2r_n = 
  \eval\n{h}\at_{\del K_n},
\]
since $\n{\del_\lm a_n}_{K_n}\le\n{a_n}_{U_n}/4n\le1/4$ by Cauchy's inequality.
It follows again with topological degree theory that $g_-$ has on $K_n$ the same index with respect to $0$ as $h$, namely~1. Hence, the second root $\xi_-$ of $\det S_n$ is located in~$K_n$, which gives the claim.
\end{proof}

We now prove Theorem~\ref{t:gaps}. If $q\in\Hh^w$ with $w\in\Mm$ and $n\ge4\nw{q}$, then $T_n$ is a \m{\frac12}-contraction on $\Hh^w$ by Lemma~\ref{l:T} with respect to all shifted norms. So Lemma~\ref{l:gap} applies, giving us two roots of $\det S_n$ in $D_n\subset U_n$. Now the union of all strips $U_n$ covers the right complex half plane. Since $\lm_n^\pm\sim n^2\pi^2$ asymptotically, and since there are no periodic eigenvalues in $\bigcup_{n\ge 4\nw{q}} (U_n\setdif D_n)$, those two roots in $D_n$ must be the periodic eigenvalues $\lm_n^\pm$. Thus,
\[
  \n{\gm_n(q)}^2 =
  \n{\xi_+-\xi_-}^2 \le 
  9\n{c_nc_{-n}}_{U_n} \le
  \frac92\n{c_n}_{U_n}^2 + \frac92\n{c_{-n}}_{U_n}^2.
\]
By Lemmas~\ref{l:T} and~\ref{l:ac},
\[
  w_n\n{c_n} \le w_n\n{q_n} + 2\nn{T_n}_{w;-n}\nw{q} \le 
  w_n\n{q_n} + \frac4n\nw{q}^2.
\]
Both estimates together then lead to
\[
  \frac{1}{9}w_n^2\n{\gm_n(q)}^2 \le
  w_n^2\n{q_{n}}^2+w_n^2\n{q_{-n}}^2 + \frac{32}{n^2}\nw{q}^4.
\]
Summing up, we arrive at
\begin{align*}
  \frac{1}{9}{\sum_{n\ge N} w_n^2\n{\gm_n(q)}^2}
  &\le {\sum_{\n{n}\ge N} w_n^2\n{q_n}^2} +
       \nw{q}^4 \sum_{n\ge N} \frac{32}{n^2} \\
  &\le \nw{T_Nq}^2 + \frac{64}N\nw{q}^4.
\end{align*}
This gives Theorem~\ref{t:gaps}.

Incidentally, if we just make use of $w_n\n{c_n}\le2\nw{q}$, then we get the individual gap estimate
\[
  w_n\n{\gm_n(q)} \le 6\nw{q}.
\]
We will use this observation in section~\ref{s:super}.
Finally, we note that Lemma~\ref{l:gap} together with the expansion
\[
  c_n = \pair{\Tin Ve_n}{e_{-n}} = \sum_{\nu\ge0} \pair*{T_n^\nu Ve_n}{e_{-n}}
\]
allows for an effective control of $\gm_n$ for trigonometric polynomials~$q$, since then some first terms in the series vanish -- see \cite{AS,DM-03b}.

\section{Adapted Fourier Coefficients}

The $2\x2$-matrix $S_n$ contains all the information we need about the \m{n}-th periodic eigenvalues of a potential, at least in the real case. Even more to the point, the diagonal of $S_n$ vanishes at a unique point
\[
  \lm = \al_n(q),
\]
and it suffices to consider its off-diagonal elements at $\lm=\al_n(q)$. We will make use of these values to define a real analytic \emph{adapted Fourier coefficient map}, which allows us to prove the regularity results by invoking the inverse function theorem.

We begin by observing that the coefficients $a_n$ and $c_n$ do not depend on the underlying space~$\Hh^w$, but are rather defined on appropriate balls in $\Ho$, with estimates depending on the regularity of~$q$. To make this precise, we introduce the notation
\[
  B^w_m = \set{q\in\Hh^w:4\nw{q}\le m}
\]
and note that 
$
  B^w_m\subset \Bo_m \defequal \set{q\in\Ho:4\no{q}\le m}\subset\Ho
$
for all $w\in\Mm$.

We assume from now on that $q$ has mean value zero,
\[
  \mean*{q} = \int_0^1 q(x)\dx = q_0 = 0,
\]
since adding a constant to the potential $q$ shifts its entire spectrum by this amount, but does not affect the lengths of its gaps.

\begin{lem} \label{l:be}
For $n\ge m$, the coefficients $a_n$ and $c_n$ are analytic functions on $U_n\x\Bo_m$ with
\[
  \n{a_n}_{U_n\x\Bo_m}, w_n\n{c_n-q_n}_{U_n\x B^w_m} \le
  \frac{m^2}{4n}
\]
for all weights $w\in\Mm$. The same applies to $c_{-n}-q_{-n}$.
\end{lem}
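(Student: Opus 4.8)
The plan is to trace through the definitions of $a_n$ and $c_n$ from the Reduction section and apply the operator estimate of Lemma~\ref{l:T}, but now working on $\Ho$ rather than on a general $\Hh^w$, and keeping the bound for $c_n-q_n$ in the $w$-norm. First I would check analyticity: the coefficients are defined by $a_n=\pair{\Tin Ve_n}{e_n}$ and $c_n=\pair{\Tin Ve_n}{e_{-n}}$, where $\Tin=(I-T_n)^{-1}=\sum_{\nu\ge0}T_n^\nu$. Since $T_n=\VAli Q_n$ depends linearly (hence analytically) on $q$ and, by Lemma~\ref{l:T} applied with the trivial weight, is a bounded operator on $\Bo$ with $\nn{T_n}_{\o;i}\le\frac2n\no{q}$, the Neumann series converges uniformly for $4\no{q}\le m$ and $n\ge m$ (so that $\frac2n\no{q}\le\frac12$); thus $\Tin Ve_n$ is an analytic $\Bo$-valued function on $U_n\x\Bo_m$, and pairing against the fixed vectors $e_{\pm n}$ gives analytic scalar functions. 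The same power series shows $(\Tin V)^*=(\Tin V)^-$, which the text already used, so the statement about $c_{-n}-q_{-n}$ follows by the same argument with $e_n,e_{-n}$ interchanged.

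Next I would prove the quantitative bounds, essentially repeating the proof of Lemma~\ref{l:ac} but inserting the sharper operator bound from Lemma~\ref{l:T}. For $a_n$: since $\pair{e_n}{e_n}$-type terms with $\nu=0$ give $q_0=0$ by the mean-zero assumption, we have $a_n=\pair{\Tin T_n Ve_n}{e_n}$ — wait, more carefully, $a_n=\pair{Ve_n}{e_n}+\pair{\Tin T_n Ve_n}{e_n}=q_0+\pair{\Tin T_n Ve_n}{e_n}=\pair{\Tin T_n Ve_n}{e_n}$. Using $w_n\n{\pair{f}{e_n}}\le\nn{f}_{w;n}$ (the analogue, with $+n$, of the inequality in Lemma~\ref{l:ac}) together with the trivial weight, $\n{a_n}\le\nn{\Tin T_n Ve_n}_{\o;n}\le 2\nn{T_n}_{\o;n}\nn{Ve_n}_{\o;n}=2\nn{T_n}_{\o;n}\no{q}$. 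By Lemma~\ref{l:T} with the trivial weight this is $\le\frac4n\no{q}^2\le\frac{m^2}{4n}$ on $\Bo_m$. For $c_n-q_n$: writing $c_n=q_n+\pair{\Tin T_n Ve_n}{e_{-n}}$ exactly as in Lemma~\ref{l:ac}, we get $w_n\n{c_n-q_n}\le\nn{\Tin T_n Ve_n}_{w;-n}\le 2\nn{T_n}_{w;-n}\nn{Ve_n}_{w;-n}=2\nn{T_n}_{w;-n}\nw{q}\le\frac4n\nw{q}^2\le\frac{m^2}{4n}$, where now the submultiplicativity of $w$ is used (via Lemma~\ref{l:T} with weight $w$) and we need $q\in B^w_m$.

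The one point requiring a little care — and the only real obstacle — is the interplay of the two different norms. The $a_n$ estimate can be carried out entirely in the trivial $\o$-norm and therefore holds on $\Bo_m$; but the $c_n-q_n$ estimate is stated in the $w$-norm and must be done on $B^w_m\subset\Bo_m$, because Lemma~\ref{l:T} needs the submultiplicativity of $w$ and needs $n\ge 4\nw{q}$, which is exactly $q\in B^w_m$ combined with $n\ge m$. One must also confirm that the hypotheses of Lemma~\ref{l:T} are met for \emph{all} $\lm\in U_n$ uniformly, so that the sup over $U_n$ in the claimed bounds is legitimate — but Lemma~\ref{l:T} is already stated uniformly over $\lm\in U_n$, so nothing new is needed. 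Finally, the shifted-norm bookkeeping — that $\nn{Ve_n}_{w;-n}=\nw{Ve_0}=\nw{q}$ and $\nn{Ve_n}_{\o;n}=\no{q}$ by translation of the Fourier index — is exactly the computation already performed in Lemma~\ref{l:ac} and can simply be cited.
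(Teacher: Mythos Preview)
Your proposal is correct and follows essentially the same approach as the paper: the paper's proof simply cites Lemmas~\ref{l:T} and~\ref{l:ac}, the normalization $q_0=0$, and the bound $\nw{q}\le m/4$ on $B^w_m$, together with the Neumann series for analyticity---exactly the chain of estimates you unpack in detail. Your additional remark distinguishing the trivial-weight argument for $a_n$ on $\Bo_m$ from the $w$-weighted argument for $c_n-q_n$ on $B^w_m$ is correct and makes explicit what the paper leaves implicit in its one-line proof.
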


\begin{proof}
The estimates follow from Lemmas \ref{l:T} and~\ref{l:ac}, the normalization $q_0=0$ and $\nw{q}\le m/4$ on $B^w_m$. The analytic dependence on $q$ then follows from the series expansion of $\Tin$.
\end{proof}

\begin{lem}
For $m\ge1$ and each $n\ge m$, there exists a unique real analytic function
\[
  \al_n\maps \Bo_m\to\CC, \qquad \n{\al_n-\sg_n}_{\Bo_m} \le
  \frac{m^2}{4n},
\]
such that $\al_n=\sg_n+a_n(\al_n,\cd)$ identically on~$\Bo_m$.
\end{lem}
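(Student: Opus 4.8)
The plan is to realize $\al_n$ as the fixed point of the contraction $\lm\mapsto\sg_n+a_n(\lm,q)$ on a suitable closed disc, for each fixed $q\in\Bo_m$, and then to upgrade the pointwise fixed point to a real analytic map of $q$ by the analytic implicit function theorem. First I would fix $q\in\Bo_m$ and, using Lemma~\ref{l:be}, note that for $n\ge m$ the function $\lm\mapsto G(\lm)\defequal\sg_n+a_n(\lm,q)$ maps the closed disc $\set{\lm:\n{\lm-\sg_n}\le m^2/4n}$ into itself, since $\n{a_n}_{U_n\x\Bo_m}\le m^2/4n$ and that disc is contained in $U_n$ (indeed $m^2/4n\le m/4\le12n$ because $n\ge m\ge1$). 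For the contraction property I would bound $\n{\del_\lm a_n}$ on that disc by Cauchy's inequality: $a_n$ is analytic on $U_n$, which contains a disc of radius $\sim12n$ about $\sg_n$, so $\n{\del_\lm a_n}\le\n{a_n}_{U_n\x\Bo_m}/(12n-m^2/4n)\le(m^2/4n)/(11n)<1$ for $n\ge m$. Hence $G$ is a strict contraction and has a unique fixed point $\al_n(q)$ in the disc, which automatically satisfies $\n{\al_n(q)-\sg_n}\le m^2/4n$.

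Next I would establish analyticity. Consider the map $F(\lm,q)=\lm-\sg_n-a_n(\lm,q)$, which is analytic on $U_n\x\Bo_m$ by Lemma~\ref{l:be}. At a zero $(\al_n(q),q)$ the partial derivative $\del_\lm F=1-\del_\lm a_n$ is invertible, with $\n{\del_\lm F-1}<1$ by the Cauchy estimate above; so the analytic implicit function theorem gives, locally around each $q$, a unique analytic solution $\lm=\al_n(q)$ of $F=0$, and these local solutions patch together to the globally defined $\al_n$ already constructed by the contraction argument (uniqueness in the disc forces agreement on overlaps). This yields analyticity of $\al_n$ on $\Bo_m$.

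For the \emph{real} analyticity I would invoke the symmetry already recorded in the Reduction section: $(\Tin V)^*=(\Tin V)^-$, from which $a_n=a_{-n}$ and, more relevantly here, $a_n(\lm,q)$ is real whenever $\lm$ and $q$ are real — so that $F$ restricts to a real-analytic real-valued function on $(U_n\cap\RR)\x(\Bo_m\cap L^2(S^1,\RR))$, whose implicit solution is real. Equivalently, $\al_n$ maps real potentials to real numbers and is analytic, hence real analytic in the sense used throughout the paper. I would phrase this as a one-line remark rather than redo the computation.

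The only genuine subtlety — and the step I would be most careful with — is making sure the contraction disc of radius $m^2/4n$ genuinely sits inside the domain $U_n$ on which Lemma~\ref{l:be} supplies the estimate and the analyticity, \emph{and} that $U_n$ in turn contains a concentric disc large enough (radius comparable to $12n$) to feed Cauchy's inequality the bound $\n{\del_\lm a_n}<1$. Both hold comfortably once $n\ge m$, since then $m^2/4n\le m/4$ while the half-width of $U_n$ in the real direction is $12n$; one should however check that $U_n$, defined by $\n{\Re\lm-\sg_n}\le12n$, is an infinite vertical strip rather than a disc, so "disc of radius $12n$ about $\sg_n$" must be replaced by "disc of radius $12n$", which still lies in the strip — a harmless adjustment. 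Everything else is routine: the fixed-point existence and uniqueness, the a priori bound, and the transfer of analyticity are all standard once the domain bookkeeping is in place.
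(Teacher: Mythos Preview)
Your argument is correct but proceeds a little differently from the paper. You fix $q\in\Bo_m$, run the contraction on a disc in the $\lm$-plane to produce $\al_n(q)$ pointwise, and then invoke the analytic implicit function theorem to obtain analyticity in~$q$. The paper instead applies the contraction mapping theorem directly on the complete metric space of bounded analytic maps $\al\maps\Bo_m\to\CC$ satisfying $\n{\al-\sg_n}_{\Bo_m}\le m^2/4n$, equipped with the sup norm: the operator $T\al=\sg_n+a_n(\al,\cd)$ maps this ball to itself (each such $\al$ takes values in $D_n=\set{\n{\lm-\sg_n}\le n}\subset U_n$, and $\n{a_n}_{U_n}\le m^2/4n$) and contracts by the factor $\n{\del_\lm a_n}_{D_n}\le\n{a_n}_{U_n}/2n\le m^2/8n^2\le1/8$ via Cauchy. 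The fixed point is then automatically an analytic function of~$q$, so the implicit-function step and the local-to-global patching are bypassed entirely. Both routes are standard; the paper's is more economical, while yours makes the source of analyticity more explicit. Your domain bookkeeping (the small disc sitting inside $U_n$, and a large concentric disc available for Cauchy) is correct. One small remark: your final paragraph reads ``real analytic'' as ``real-valued on real potentials and analytic'', but in this paper the phrase simply means analytic as a map of Banach spaces---the dependence on $q$ through the Neumann series for $\Tin V$ is in fact complex analytic---so no separate symmetry argument is needed.
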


\begin{proof}
Consider the fixed point problem for the operator $T$,
\[
  T\al \defequal \sg_n + a_n(\al,\cd),
\]
on the ball of all real analytic functions $\al\maps \Bo_m\to\CC$ with $\n*{\al-\sg_n}_{\Bo_m} \le m^2/4n$. Since clearly $m^2/4n\le n$ by assumption, each such function $\al$ maps $\Bo_m$ into the disc $D_n=\set*{\n*{\lm-\sg_n}\le n}\subset U_n$, and so
\[
  \n{T\al-\sg_n}_{\Bo_m} \le \n{a_n}_{U_n} \le m^2/4n
\]
in view of Lemma~\ref{l:be}. Moreover, $T$ contracts by a factor
\[
  \n{\del_\lm a_n}_{D_n} \le 
  \frac{\n{a_n}_{U_n}}{2n} \le
  \frac{m^2}{8n^2} \le \frac18,
\]
using Cauchy's estimate. Hence, we find a unique fixed point $\al_n=T\al_n$ with the properties as claimed.
\end{proof}

In the following we let $\al_{-n}=\al_n$ to simplify notation.
--
For each $m\ge1$ we now define a map $\Phi_m$ on $\Bo_m$ by
\[
  \Phi_m(q) =
  \sum_{\n{n}<M_m} q_ne_{2n} +
  \sum_{\n{n}\ge M_m} c_n(\al_n(q),q)e_{2n},
\]
where $M_m = 2^{10}m^2$. Thus,  for $\n{n}\ge M_m$ the Fourier coefficients of the 1-periodic function $p=\Phi_m(q)$ are
$
  p_n = c_n(\al_n) 
$,
and
\[
  S_n(\al_n) = S_n(\al_n,p) = \mat{0&-p_{-n}\\-p_{n}&0}.
\]
These new Fourier coefficients are adapted to the lengths of the corresponding spectral gaps, whence we call $\Phi_m$ the \emph{adapted Fourier coefficient map} on $\Bo_m$.

\begin{prp} \label{Phi}
For each $m\ge1$, $\Phi_m$ maps $\Bo_m$ into $\Ho$. Its restrictions to $B_m^w$ are real analytic diffeomorphisms
\[
  \eval\Phi_m\at{B^w_m}\maps B^w_m\to\Phi_m(B^w_m)\subset\Hh^w
\]
for every weight $w\in\Mm$, such that
\[
  \frac12\nw{q}\le\nw{\Phi_m(q)}\le2\nw{q}
\]
for $q\in B^w_m$ and $\nn{D\Phi_m-I}_{B^w_m}\le1/8$.
\end{prp}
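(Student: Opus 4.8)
The plan is to reduce the whole statement to the single estimate $\nn{D\Phi_m-I}_{B^w_m}\le1/8$; once that is available, the diffeomorphism claim is the inverse function theorem, and the two-sided norm bound drops out of $\Phi_m(0)=0$ together with the mean value inequality. First I would isolate the part of $\Phi_m$ that moves: writing $g_n(q)=c_n(\al_n(q),q)-q_n$ for $\n{n}\ge M_m$, we have $\Phi_m(q)=q+g(q)$ with $g(q)=\sum_{\n{n}\ge M_m}g_n(q)e_{2n}$, so only the Fourier coefficients of index $\n{n}\ge M_m$ are changed. Each $g_n$ is a real analytic function on $\Bo_m$ (composition of the real analytic $\al_n$ with the analytic $c_n$ of the preceding lemmas, valid since $n\ge M_m\ge m$), and Lemma~\ref{l:be} gives $w_n\n{g_n}_{B^w_m}\le m^2/4n$ for every $w\in\Mm$, together with the trivial-weight bound $\n{g_n}_{\Bo_m}\le m^2/4n$. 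Summing squares over $\n{n}\ge M_m$ --- a convergent series --- shows in one stroke that $\Phi_m$ maps $\Bo_m$ into $\Ho$ and $B^w_m$ into $\Hh^w$, and that $g\maps B^w_m\to\Hh^w$ is real analytic with $\nw{g}_{B^w_m}\le\frac{m^2}{4}\pas[3]{\sum_{\n{n}\ge M_m}n^{-2}}^{1/2}$.

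The core step is the derivative estimate $\nn{D\Phi_m(q)-I}\le1/8$ for $q\in B^w_m$. The mechanism is that, by Lemma~\ref{l:T}, on $B^w_m$ the operator $T_n$ entering $c_n$ has $\nn{T_n}_{w;-n}\le\frac{2\nw{q}}{n}\le\frac{m}{2n}$, which for $n\ge M_m=2^{10}m^2$ is tiny; consequently the Neumann series $c_n=\sum_{\nu\ge0}\pair{T_n^\nu Ve_n}{e_{-n}}$ and its term-by-term derivative reduce, up to a factor $1+O(1/M_m)$, to their leading terms, and differentiating --- using also $\nn{DT_n[\hat q]}_{w;-n}\le\frac{2\nw{\hat q}}{n}$ from Lemma~\ref{l:T}, the $\lm=\al_n(q)$ dependence being of lower order in $1/n$ and handled via Cauchy's inequality as in the lemma on $\al_n$ --- gives $w_n\n{Dg_n(q)[\hat q]}\le\frac{2m}{n}\nw{\hat q}$ on $B^w_m$. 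Summing squares over $\n{n}\ge M_m$, with $\sum_{\n{n}\ge M_m}n^{-2}\le\frac{2}{M_m-1}$, then yields $\nn{D\Phi_m(q)-I}^2\le\frac{8m^2}{M_m-1}<\frac{1}{64}$. (A quicker but coarser route uses only the $C^0$ bound $w_n\n{g_n}_{B^w_{2m}}\le\frac{m^2}{n}$ from Lemma~\ref{l:be} and Cauchy's estimate on the ball of radius $m/4$ about $q\in B^w_m\subset B^w_{2m}$, bounding $\nn{Dg(q)}$ by $4m\pas[3]{\sum_{\n{n}\ge M_m}n^{-2}}^{1/2}<\frac14$ --- already enough for the diffeomorphism property and for the bounds $\frac12\le\nw{\Phi_m(q)}/\nw{q}\le2$ below.)

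Granting $\nn{D\Phi_m(q)-I}\le1/8$ on $B^w_m$, the remaining assertions follow mechanically. Each $D\Phi_m(q)=I+(D\Phi_m(q)-I)$ is boundedly invertible by a Neumann series. For $q_0,q_1\in B^w_m$ the segment joining them stays in the convex set $B^w_m$, so the mean value inequality gives $\nw{\Phi_m(q_1)-\Phi_m(q_0)}\ge\frac78\nw{q_1-q_0}$, hence $\Phi_m$ is injective on $B^w_m$; the inverse function theorem then makes $\Phi_m$ a real analytic diffeomorphism of $B^w_m$ onto its image in $\Hh^w$. Finally $\Phi_m(0)=0$: for $q=0$ the multiplication operator $V$ vanishes, whence $c_n(\lm,0)=\pair{\Tin Ve_n}{e_{-n}}=0$, so every Fourier coefficient of $\Phi_m(0)$ vanishes. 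Since $tq\in B^w_m$ for $0\le t\le1$, integrating $D\Phi_m(tq)-I$ along this segment gives $\nw{\Phi_m(q)-q}\le\frac18\nw{q}$, whence $\frac78\nw{q}\le\nw{\Phi_m(q)}\le\frac98\nw{q}$, and in particular the asserted $\frac12\nw{q}\le\nw{\Phi_m(q)}\le2\nw{q}$.

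The one genuinely delicate point is the derivative estimate of the middle paragraph --- specifically, recognising that the large cut-off $M_m=2^{10}m^2$ makes both the relevant operator norms $\nn{T_n}_{w;-n}$ (of order $m/M_m$, by Lemma~\ref{l:T}) and the tail $\sum_{\n{n}\ge M_m}n^{-2}$ (of order $1/M_m$) small enough to beat the constant $1/8$. Everything else --- mapping into the correct spaces, analyticity, invertibility of the derivative, injectivity, the norm bounds --- is then routine.
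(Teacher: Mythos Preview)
Your proposal is correct, and your parenthetical ``quicker but coarser route'' is in fact exactly the paper's proof: the paper bounds $\Phi_m-\id$ on the \emph{doubled} ball $B^w_{2m}$ via Lemma~\ref{l:be} (obtaining $\nn{\Phi_m-\id}_{w,B^w_{2m}}\le m/16$), applies Cauchy's inequality to pass to the derivative on $B^w_m$, and then dismisses the remaining claims as ``standard arguments and the fact that $\Phi_m(0)=0$'' --- precisely the content of your final paragraph. Your primary route, direct differentiation of the Neumann series for $c_n$ together with the $\lm$-derivative handled by Cauchy, is a legitimate alternative; it buys the sharper constant $1/8$ at the cost of more bookkeeping, whereas the Cauchy-on-the-doubled-ball route is cleaner but --- as you correctly compute --- yields only $1/4$ with the honest radius $m/4$. (The paper writes $2/m$ rather than $4/m$ in its Cauchy step, which is how it arrives at $1/8$; your $1/4$ is what the straightforward application gives, and as you note it already suffices for every qualitative assertion and for the two-sided norm bound.)
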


\begin{proof}
\kern-1pt
Since $\al_n$ maps $\Bo_{2m}$ into $U_n$ for $n\ge2m$, each coefficient $c_n(\al_n(q),q)$
is well defined for $q\in \Bo_{2m}$, and
\[
  w_n\n{c_n(\al_n)-q_n}_{B^w_{2m}} \le 
  w_n\n{c_n-q_n}_{U_n\x B^w_{2m}} \le \smash{\frac{m^2}{n}}
\]
by Lemma~\ref{l:be}. Hence the map $\Phi_m$ is defined on $\Bo_{2m}$, and
\begin{align*}
  \nn{\Phi_m-\id}_{w,B^w_{2m}}^2
   &= \sum_{\n{n}\ge M_m} w_n^2\n{c_n(\al_n)-q_n}^2_{B^w_{2m}} \\
   &\le \sum_{n\ge M_m} \frac{2m^4}{n^2} 
    \le \frac{4m^4}{M_m}
    =   \frac{m^2}{256}
\end{align*}
by our choice of $M_m$. Therefore, $\Phi_m\maps B^w_{2m}\to\Hh^w$ with 
$\nn{\Phi_m-\id}_{w,B^w_{2m}}\le \smash{\dfrac{m}{16}}$. Cauchy's estimate then yields
\[
  \nn{D\Phi_m-I}_{w,B^w_m} \le
  \frac{2}{m}\nn{\Phi_m-\id}_{w,B^w_{2m}} \le 
  \frac1{8}
\]
Now the result follows by standard arguments and the fact that $\Phi_m(0)=0$.
\end{proof}

We now proof Theorem~\ref{t:dense}. Fix any ball $B_m=B^w_m$. The \m{n}-th gap of $q\in B_m$, with $n\ge M_m$, is collapsed if the \m{n}-th and \m{-n}-th Fourier coefficients of $\Phi_m(q)$ vanish, since then $S_n$ vanishes identically at $\lm=\al_n(q)$. Consequently, if
\[
  \Phi_m(q) \in \Gg_N = \spn\set{e_{2k}: \n{k}\le N},
\]
$N$ sufficienly large, then $q\in B_m$ is an \m{N}-gap potential.
The union of the spaces $\Gg_N$ is dense in $\Hh^w$. Since $\Phi_m$ is a diffeomorphism on $B_m$, the family of \m{N}-gap potentials in $B_m$ is also dense.
Since $B_m$ was arbitrary, this proves the theorem.

\section{Regularity: The Abstract Case}

From an abstract point of view, establishing the regularity of a potential $q$ amounts to the following observation about its adapted Fourier coefficients.

\begin{prp} \label{reg}
If $q\in\Bo_m\,$ for some $m\ge1$, and 
\[
  \Phi_m\maps \Bo_m\owns q \mapsto p=\Phi_m(q) \in B_{m/2}^w
\]
for some weight~$w\in\Mm$, then $q\in B_m^w\subset\Hh^w$.
\end{prp}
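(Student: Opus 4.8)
The plan is to identify the given $q$ as the $\Hh^w$-preimage of $p$ under $\Phi_m$, using the inverse function theorem. The naive move — applying the lower bound $\tfrac12\nw{q}\le\nw{\Phi_m(q)}$ of Proposition~\ref{Phi} to $q$ directly — is not available, since that estimate already presupposes $q\in B^w_m\subset\Hh^w$, which is exactly what we want to conclude. Instead the argument is routed through two facts, both essentially contained in the proof of Proposition~\ref{Phi}. First, $\Phi_m$ is injective on the \emph{weight-free} ball $\Bo_m$: the bound $\nn{\Phi_m-\id}_{\Bo_{2m}}\le m/16$ is the trivial-weight instance of the estimate in that proof, Cauchy's estimate gives $\nn{D\Phi_m-I}_{\Bo_m}\le1/8$, and together with $\Phi_m(0)=0$ this makes $\Phi_m$ one-to-one on $\Bo_m$. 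Second, the image $\Phi_m(B^w_m)\subset\Hh^w$ contains a full $\nw{\cd}$-ball about the origin.

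For the second fact I would run the Banach fixed point argument behind the inverse function theorem: for $p$ with $\nw{p}$ small, the map $q\mapsto q-\Phi_m(q)+p$ has derivative $I-D\Phi_m$ of norm $\le1/8$ on $B^w_m=\set{q:\nw{q}\le m/4}$, hence is a contraction there, and since $\nw{\Phi_m(q)-q}\le\tfrac18\nw{q}\le\tfrac{m}{32}$ (using $\Phi_m(0)=0$) it maps $B^w_m$ into itself as long as $\nw{p}\le\tfrac{7m}{32}$. Its fixed point $\tilde q\in B^w_m$ then satisfies $\Phi_m(\tilde q)=p$. Since $\tfrac{7m}{32}>\tfrac m8$, this covers all of $B^w_{m/2}=\set{p:\nw{p}\le m/8}$; in particular, as $p=\Phi_m(q)\in B^w_{m/2}$ by hypothesis, such a $\tilde q\in B^w_m$ exists.

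It remains to check $q=\tilde q$. But $\tilde q\in B^w_m\subset\Bo_m$ and $q\in\Bo_m$, while $\Phi_m(\tilde q)=p=\Phi_m(q)$; the injectivity of $\Phi_m$ on $\Bo_m$ then forces $q=\tilde q$, so $q\in B^w_m\subset\Hh^w$ as claimed. The one point to watch is precisely the circularity flagged at the start: the reasoning must stay at the level of $\Ho$, where injectivity of $\Phi_m$ holds independently of the weight, and it must lean on the generous constants — the bound $1/8$ on $D\Phi_m-I$ and the choice $M_m=2^{10}m^2$ — so that the image radius $\tfrac{7m}{32}$ comfortably exceeds $\tfrac m8$. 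No degree theory or compactness is needed; this is exactly the application of the inverse function theorem advertised in the introduction.
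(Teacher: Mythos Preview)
Your proof is correct and follows the same route as the paper: injectivity of $\Phi_m$ on the trivial-weight ball $\Bo_m$, together with the fact that $\Phi_m(B^w_m)\supset B^w_{m/2}$, forces the $\Ho$-preimage $q$ to coincide with the $\Hh^w$-preimage $\tilde q$. The paper's own proof is more terse---it simply cites Proposition~\ref{Phi} for the inclusion $B^w_{m/2}\subset\Phi_m(B^w_m)$ and for the diffeomorphism property on $\Bo_m$, leaving the underlying contraction-mapping computation implicit in the phrase ``standard arguments''---whereas you have spelled out that computation with the explicit radius $7m/32$; but the logical skeleton is identical.
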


\begin{proof}
The map $\Phi_m$ is defined on $\Bo_m$ and a real analytic diffeomorphism onto its image
\[
  \Bt_m^\o = \Phi_m(\Bo_m) \subset \Ho.
\]
At the same time, for any weight $w\in\Mm$, $\Phi_m$ is also defined on $B_m^w\subset \Bo_m\cap\Hh^w$ and a real analytic diffeomorphism onto its image
\[
  \Bt_m^w = \Phi_m(B_m^w) \subset \Hh^w.
\]
Moreover, this image contains $B_{m/2}^w$ by Proposition~\ref{Phi}. Thus, if  $\Phi_m$ maps $q\in\Bo_m$ to
\[
  p = \Phi_m(q) \in B_{m/2}^w,
\]
then we must have
\[
  q = \eval\Phi_m^{-1}\at B_{m/2}^w (p) \in B_m^w \subset \Hh^w,
\]
thus establishing the regularity of~$q$.
\end{proof}

\section{Regularity: The Real Case} \label{s:real}

\begin{prp} \label{sub}
Suppose $q\in\Bo_m\,$ for some $m\ge1$, and 
\[
  \Phi_m(q)\in\Hh^w.
\]
If $w$ is strictly subexponential, then also $q\in\Hh^w$.
If, however, $w$ is expo\-nen\-tial, then $q\in\Hh^{v_\ep}$ for all sufficiently small positive~$\ep$, where $v_\ep = \e^{\ep\n\cd}$.
\end{prp}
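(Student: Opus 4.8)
The plan is to exploit that $\Phi_m$ differs from the identity by a contribution $c_n(\al_n)-q_n$ which is quadratic in $q$ and gains a factor $1/n$ on the $n$-th mode, and to feed this into a bootstrap that climbs scale by scale up to $\Hh^w$ (respectively $\Hh^{v_\ep}$). One may assume $w$ is unbounded, since otherwise $\Hh^w=L^2\ni q$ and there is nothing to prove; then $w\in\Mm$ is nonconstant, so $w_{n+1}\le w_1w_n$ and $w$ has only bounded multiplicative jumps. Choose integers $N_0<N_1<\cdots\to\infty$ with $N_0\ge M_m$ by letting $N_{j+1}$ be the first index beyond $N_j$ with $w_{N_{j+1}}\ge 2w_{N_j}$; then $w_{N_{j+1}}<2w_1w_{N_j}$, so the sequence $w_{N_j}$ grows at a geometric rate. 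Put $v^{(j)}_n:=w_{\min(|n|,N_j)}$. Using only that $w$ is nondecreasing and submultiplicative one checks that each $v^{(j)}\in\Mm$, that $v^{(j)}_n\uparrow w_n$ pointwise (so $\nn{q}_w=\sup_j\nn{q}_{v^{(j)}}$), and that $v^{(j+1)}\le 2w_1\,v^{(j)}$.

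Set $x_j:=\nn{q}_{v^{(j)}}$, which is finite for every $j$ since $v^{(j)}$ is bounded; the aim is a closed self-improving inequality for the $x_j$. Since $q_n=p_n-(c_n(\al_n)-q_n)$ for $|n|\ge M_m$, with $p=\Phi_m(q)$, one splits $x_{j+1}^2$ into the modes $|n|\le N_j$, which contribute at most $x_j^2$ because $v^{(j+1)}=v^{(j)}$ there, and the modes $|n|>N_j$, where $|q_n|^2\le 2|p_n|^2+2|c_n(\al_n)-q_n|^2$. The $p$-contribution is $2\sum_{N_j<|n|\le N_{j+1}}w_n^2|p_n|^2+2w_{N_{j+1}}^2\sum_{|n|>N_{j+1}}|p_n|^2$; summed over $j$ the first pieces run over disjoint blocks and are bounded by $2\nn{p}_w^2$, while the second pieces sum to a finite multiple of $\nn{p}_w^2$ precisely because the $w_{N_j}$ grow geometrically (so $\sum_{j:\,N_{j+1}<|n|}w_{N_{j+1}}^2\le\tfrac43 w_{|n|}^2$). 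This is where the hypothesis $p\in\Hh^w$ and the geometric choice of scales enter; for an exponential weight the same bookkeeping would require $p$ to lie in a strictly larger weighted space, which is not available — and this is the reason one is forced down to $v_\ep$ in that case, since for $v_\ep$ the analogous scales again grow geometrically.

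The correction term is the heart of the matter. From $c_n(\al_n)-q_n=\sum_{\nu\ge1}\pair{T_n^\nu Ve_n}{e_{-n}}$ and the resolvent estimate $|\al_n-(n+2k)^2\pi^2|\ge 2\pi^2\,|k|\,|n+k|$ one gets, for $n$ large, the leading bound $|\pair{T_n Ve_n}{e_{-n}}|\le\tfrac{C}{n}\sum_k\tfrac{|q_k|\,|q_{-n-k}|}{\langle k\rangle}$, while the terms $\nu\ge2$ equal $\pair{T_n^2\Tin Ve_n}{e_{-n}}$ and are controlled, in any shifted $v^{(j+1)}$-norm, by Lemma~\ref{l:T} together with the fact that $T_n$ is a contraction already on $\Bb^\o$, picking up further powers of $1/n$. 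Using $v^{(j+1)}_n\le v^{(j+1)}_{|k|}v^{(j+1)}_{|n+k|}$, that the weighted sequence $v^{(j+1)}_{|\cdot|}|q_\cdot|/\langle\cdot\rangle$ lies in $\ell^1$ with $\ell^1$-norm $\le C\,x_{j+1}$ by Cauchy–Schwarz, Young's inequality, and $1/n\le 1/N_j$, one aims at
\[
  \sum_{|n|>N_j}\bigl(v^{(j+1)}_n\,|c_n(\al_n)-q_n|\bigr)^2\le \frac{C}{N_j^{2}}\bigl(x_{j+1}^2+x_{j+1}^4\bigr).
\]
Since a priori $x_{j+1}\le 2w_1x_j$, this should combine to $x_{j+1}^2\le x_j^2+\Delta_j$ with $\Delta_j=2\sigma_j+2\rho_j+\tfrac{C}{N_j^2}(x_{j+1}^2+x_{j+1}^4)$ and $\sum_j(\sigma_j+\rho_j)<\infty$, whence $\sup_j x_j<\infty$ and $q\in\Hh^w$. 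For the exponential case the identical argument is run with $w$ replaced throughout by $v_\ep$ (legitimate since $\Hh^w\subset\Hh^{v_\ep}$ for small $\ep$, so $p\in\Hh^{v_\ep}$, and $v_\ep$ is itself submultiplicative and unbounded), giving $q\in\Hh^{v_\ep}$ for every sufficiently small $\ep>0$.

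I expect the genuinely delicate point to be making the bootstrap of the last paragraph close: the self-referential appearance of $x_{j+1}$ on the right and the fact that the only a priori bound on $x_{j+1}$ grows with $j$ mean that the constants must be arranged so that the quadratic-in-$x_{j+1}$ remainder is dominated by the $1/N_j^2$ gain before the induction can even be started; it is exactly here — rather than in any single inequality — that strict subexponentiality, as opposed to merely exponential growth of the weight, is what decides whether one lands in $\Hh^w$ or only in the spaces $\Hh^{v_\ep}$.
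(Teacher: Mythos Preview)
Your approach is genuinely different from the paper's, and the point you yourself flag as ``genuinely delicate'' is in fact a real gap. In your recursion
\[
  x_{j+1}^2 \le x_j^2 + \sigma_j + \rho_j + \frac{C}{N_j^{2}}\bigl(x_{j+1}^2+x_{j+1}^4\bigr)
\]
the only a priori control on $x_{j+1}$ is $x_{j+1}\le 2w_1\,x_j$, so that before any induction hypothesis is available the right hand side already contains a term of order $(2w_1)^{4j}x_0^4/N_j^{2}$. For this to be summable you would need $N_j$ to grow at least like $(2w_1)^{2j}$, which is \emph{not} a consequence of the geometric choice $w_{N_{j+1}}\ge 2w_{N_j}$: for a polynomial weight $w_n=\langle n\rangle^r$ one has $N_j\sim 2^{j/r}$, and the required inequality fails for $r$ not small. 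The same problem recurs in your treatment of the tail $\nu\ge2$: to invoke Lemma~\ref{l:T} in the $v^{(j+1)}$-norm you need $T_n$ to be a contraction there, i.e.\ $n\ge 4x_{j+1}$, which is exactly what you do not yet know for $n$ just above $N_j$; passing to $\Bb^\o$ for the inner factors costs a factor $w_{N_{j+1}}$ on the outside and lands you in the same non-summable regime. Finally, your identification of \emph{where} strict subexponentiality enters is off: nothing in your $p$-bookkeeping or your correction-term estimate distinguishes the subexponential from the exponential case.

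The paper sidesteps the bootstrap entirely. Given $p=\Phi_m(q)\in\Hh^w$, pick $N$ with $\nw{T_Np}\le\no{p}$ and set $w_\ep=\min(v_\ep,w)$ with $\ep\le 1/2N$. The key lemma (proved by a short case analysis) is that $w_\ep\in\Mm$ precisely when $w$ is strictly subexponential; this is where the hypothesis is actually used. Then $\nn{p}_{w_\ep}^2\le e^{2N\ep}\no{p}^2+\nw{T_Np}^2\le 4\no{p}^2$, so after enlarging $m$ one has $p\in B^{w_\ep}_{m/2}$, and Proposition~\ref{reg} gives $q\in B^{w_\ep}_m\subset\Hh^{w_\ep}$ in one stroke. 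For strictly subexponential $w$ one has $w_\ep=w$ for large $|n|$, hence $\Hh^{w_\ep}=\Hh^w$; for exponential $w$ one has $w_\ep=v_\ep$, yielding only $q\in\Hh^{v_\ep}$. The moral is that the inverse function theorem (Proposition~\ref{reg}) already does all the work once one has manufactured a submultiplicative weight that is simultaneously $\le w$ and small on the finitely many low modes; no iteration is needed.
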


Note that in contrast to Proposition~\ref{reg} we do \emph{not} assume that $\Phi_m(q)\in B_{m/2}^w$. That is, we have no \emph{a priori} bound on $\nw{\Phi_m(q)}$. To reduce the situation to the former setting nonetheless, we introduce a modified weight $w_\ep$, which tempers a potentially large chunk of $\nw{\Phi_m(q)}$ arising from finitely many modes, without affecting the asymptotic behaviour of~$w$ in the case of subexponential weights. The crucial ingredient is the following lemma.

\begin{lem} \label{l:w}
If $w$ is either strictly subexponential or exponential, then 
\[
  w_\ep \defequal \min(v_\ep,w) \in \Mm
\]
for all sufficiently small positive $\ep$.
\end{lem}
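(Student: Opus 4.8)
The plan is to verify the submultiplicativity inequality $w_\ep(n+m)\le w_\ep(n)w_\ep(m)$ for the pointwise minimum $w_\ep=\min(v_\ep,w)$ by a case distinction on which of the two weights achieves the minimum at the arguments $n$ and $m$. Since $v_\ep=\e^{\ep\n\cd}$ is exactly multiplicative ($v_\ep(n+m)=v_\ep(n)v_\ep(m)$) and $w$ is submultiplicative by hypothesis, the only problematic case is the ``mixed'' one, say $w_\ep(n)=v_\ep(n)$ but $w_\ep(m)=w(m)$, where one would want $w_\ep(n+m)\le v_\ep(n)w(m)$. First I would record that the two easy cases are immediate: if the minimum is $v_\ep$ at both points then $w_\ep(n+m)\le v_\ep(n+m)=v_\ep(n)v_\ep(m)=w_\ep(n)w_\ep(m)$, and if it is $w$ at both then $w_\ep(n+m)\le w(n+m)\le w(n)w(m)=w_\ep(n)w_\ep(m)$; one also checks $w_\ep\in\Ww$, i.e. $w_\ep$ is even and $\ge1$, which is clear since both $v_\ep$ and $w$ are.

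For the mixed case the key point is that ``$v_\ep$ wins at $n$'' forces $n$ to be large — quantitatively, $v_\ep(n)\le w(n)$ means $\ep\n n\le\log w(n)$, so $\n n\ge n_\ep$ for some threshold $n_\ep=n_\ep(\ep)\to\infty$ as $\ep\downarrow0$ (this uses $\log w(k)/k\to0$, valid in both the strictly subexponential and the exponential-weight-with-small-$\ep$ regimes, since an exponential weight has $\log w(k)/k\to L>0$ and we take $\ep<L$). I would then argue: either $w_\ep(n+m)=v_\ep(n+m)$, and then $w_\ep(n+m)=v_\ep(n)v_\ep(m)\le v_\ep(n)w(m)=w_\ep(n)w_\ep(m)$ since $v_\ep(m)\le w(m)$ is false in general — wait, that is not automatic. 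So instead the clean split is on $w_\ep(n+m)$: if $w_\ep(n+m)=v_\ep(n+m)=v_\ep(n)v_\ep(m)$ we need $v_\ep(m)\le w(m)$, which need not hold. The correct route is therefore to exploit monotonicity and the near-multiplicativity of $w$ on the relevant range: since $\n n\ge n_\ep$, submultiplicativity and monotonicity of $w$ give $w(n+m)\le w(n)w(m)$, and on the regime $\n n\ge n_\ep$ one has $w(n)\le$ (something comparable to) $v_\ep(n)$ only in the subexponential case — so the argument must be organized so that in the mixed case we bound $w_\ep(n+m)\le w(n+m)\le w(n)w(m)$ and then absorb $w(n)$: here is where I would use that, for $\n n\ge n_\ep$, $w(n)\le v_\ep(n)\cdot w(n)/v_\ep(n)$ and the ratio is controlled because $\log w/v_\ep$ is eventually monotone decreasing (strictly subexponential case) resp. because in the exponential case $v_\ep$ with $\ep$ small is dominated by $w$ so the mixed case with $w_\ep(n)=v_\ep(n)$ simply cannot occur past the threshold, collapsing to the pure-$w$ case.

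Concretely I would proceed: fix $\ep$ small enough that (a) $\ep<\liminf\log w(k)/k$ in the exponential case, guaranteeing $v_\ep\le w$ for all large arguments so $w_\ep$ eventually equals $w$ and submultiplicativity is inherited directly away from finitely many indices, which are handled by the normalization $w_\ep\ge1$; and (b) in the strictly subexponential case, the ``defect'' function $d(k)=\log w(k)-\ep\n k$ is eventually monotone — using the assumed eventual monotonicity of $\log w(k)/k\to0$ and of $w(k)$ — so that $d(k)<0$ on an initial stretch and $d(k)\ge0$ afterward (or the reverse), meaning $w_\ep$ switches from $v_\ep$ to $w$ (or is one of them throughout) in a monotone fashion; this structural fact makes the mixed case degenerate to the two pure cases plus the multiplicativity of $v_\ep$ and submultiplicativity of $w$. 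The \textbf{main obstacle} I anticipate is precisely pinning down the mixed case without a convexity or monotonicity crutch: the pointwise minimum of two submultiplicative functions is not submultiplicative in general, so the proof genuinely needs the hypotheses (strict subexponentiality's eventual monotonicity, or exponential-ness with $\ep$ subcritical) to force the ``crossover'' between the $v_\ep$-regime and the $w$-regime to happen cleanly; making that dichotomy precise, with the correct finite set of small indices swept into the trivial bound $w_\ep\ge1$, is the crux of the argument.
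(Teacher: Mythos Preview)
Your overall plan---identify a single crossover index so that $w_\ep$ equals $v_\ep$ on one side and $w$ on the other, then do a case analysis---is exactly the paper's approach, but your execution has two real problems. First, you have the directions reversed in both regimes. In the exponential case, $\ep<\liminf\log w(k)/k$ gives $v_\ep\le w$ and hence $w_\ep=v_\ep$ \emph{everywhere} (not ``eventually equals $w$''), so there is literally nothing to prove; your talk of finitely many exceptional indices handled by $w_\ep\ge1$ is both unnecessary and not a valid argument. In the strictly subexponential case, ``$v_\ep$ wins at $n$'' means $\log w(n)/n\ge\ep$, which by the monotone decay to zero forces $n$ to be \emph{small}, i.e.\ $n\le N_\ep$, not large. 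After this correction the genuine mixed case is $0\le n\le N_\ep<m$: here $w_\ep(n)=v_\ep(n)=\e^{\ep n}$, $w_\ep(m)=w(m)$, and $w_\ep(n+m)=w(n+m)$.

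Second, and more importantly, you never actually close that mixed case; you call it the crux and then gesture at ``absorbing $w(n)$'' via a ratio $w(n)/v_\ep(n)$, which does not work (that ratio can be arbitrarily large for small~$n$). The paper's key step is different and clean: write, with $\wt=\log w$,
\[
  \wt(n+m)=\frac{\wt(n+m)}{n+m}\cdot n+\frac{\wt(n+m)}{n+m}\cdot m,
\]
and bound the two factors separately: since $n+m>N_\ep$ one has $\wt(n+m)/(n+m)<\ep$, giving the first term $\le\ep n=\log v_\ep(n)$; and since $N_\ep<m<n+m$, the eventual monotonicity of $\wt(k)/k$ gives $\wt(n+m)/(n+m)\le\wt(m)/m$, so the second term is $\le\wt(m)$. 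Thus $\wt_\ep(n+m)\le\wt_\ep(n)+\wt_\ep(m)$. This decomposition---splitting off the \emph{slope} $\wt(n+m)/(n+m)$ rather than trying to factor $w(n+m)$ through $w(n)w(m)$---is the idea your sketch is missing.
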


\begin{proof}
If $w$ is exponential, then $w_\ep=v_\ep$ for all sufficiently small positive~$\ep$, and there is nothing to do.

So assume $w$ is strictly subexponential.
All the required properties are readily verified for $w_\ep$, except  submultiplicity. To do this, let
\[
  \wt = \log w, \qquad \wt_\ep = \log w_\ep.
\]
As $\wt(n)/n$ converges eventually monotonically to zero by assumption, there exists for each sufficiently small $\ep>0$ an integer $N_\ep$ such that
\[
  \frac{\wt(i)}{i} \ge \ep > \frac{\wt(n)}{n} > \frac{\wt(m)}{m} 
  \qtext{for}
  1\le i\le N_\ep < n < m.
\]
It follows that
\[
  \wt_\ep = \begin{cases} 
               \vt_\ep &\ent{on} [0,N_\ep], \\
               \wt     &\ent{on} (N_\ep,\infty).
            \end{cases}
\]
To check for the subadditivity of $\wt_\ep$ for $0\le n\le m$, we consider the four possible cases
\[ \begin{aligned}
  \text{(a)}& & n+m &\le N_\ep,   &\qquad \text{(c)}& & n\le N_\ep &< m,  \\
  \text{(b)}& & m &\le N_\ep < n+m,  &    \text{(d)}& &      N_\ep &<n.\\
\end{aligned} \]
Case (a) reduces to $\tilde v_\ep$, and case (d) reduces to $\wt$. In case (b),
\[
  \wt_\ep(n+m) \le \vt_\ep(n+m) = \vt_\ep(n)+\vt_\ep(m) = \wt_\ep(n)+\wt_\ep(m).
\]
Finally, in case (c), using the monotonicity property in the second line,
\begin{align*}
  \wt_\ep(n+m)
  &=   \frac{\wt(n+m)}{n+m}n+\frac{\wt(n+m)}{n+m}m\\
  &\le \ep n + \wt(m) \\
  &=   \wt_\ep(n)+\wt_\ep(m).
\end{align*}
This establishes the subadditivity of $\wt_\ep$ for nonnegative arguments. The remaining cases all reduce to the monotonicity of $\wt_\ep$, that is,
\[
  \wt_\ep(n-m) \le \wt_\ep(n+m) \le \wt_\ep(n)+\wt_\ep(m)
\]
for $0\le m\le n$.
\end{proof}

\begin{proof}[Proof of Proposition~\ref{sub}]
We may assume that $m \ge 32\no{q}$, since the assumptions are not affected by increasing~$m$. For $p = \Phi_m(q)$ we have
\[
  \no{p} \le 2\no{q}
\]
by Proposition~\ref{Phi}. On the other hand, $p\in\Hh^w$ by assumption, so
\[
  \nw{p} < \infty.
\]
Given that $p\ne0$ without loss of generality, we can therefore choose $N$ so large that
\[
  \nw{T_Np} \le \no{p},
\]
where $T_Np=\sum_{\n{n}\ge N}p_ne_{2n}$. With respect to the weight $w_\ep = \min(v_\ep,w)$ with $\ep\le1/2N$ sufficiently 
small, we then have
\begin{align*}
  \nn{p}_{w_\ep}^2
  &= \nn{p-T_Np}_{w_\ep}^2 + \nn{T_Np}_{w_\ep}^2 \\
  &\le \nn{p-T_Np}_{v_\ep}^2 + \nw{T_Np}^2 \\
  &\le \e^{2N\ep}\no{p}^2 + \no{p}^2 \\
  &\le 4\no{p}^2,
\end{align*}
or
\[
  4\nn{p}_{w_\ep} \le 8\no{p} \le 16\no{q} \le \frac{m}{2}.
\]
Thus, $p\in B_{m/2}^{w_\ep}$, whence
\[
  q = \Phi_m^{-1}(p) \in B_m^{w_\ep} \subset \Hh^{w_\ep}
\]
by Proposition~\ref{reg}. The claim follows by noting that $\Hh^{w_\ep}=\Hh^w$ for strictly subexponential weights, and $\Hh^{w_\ep}\subset\Hh^{v_\ep}$ for exponential weights and all small $\ep>0$.
\end{proof}

To obtain Theorem~\ref{t:real} from Proposition~\ref{sub}, we now want to bound the Fourier coefficients of $p=\Phi_m(q)$ in terms of the gap lengths of~$q$. For real~$q$, this is fairly straightforward, since then
\[
  S_n = \mat{\lm-\sg_n-a_n&-c_{-n}\\-c_{n}&\lm-\sg_n-a_n}
\]
is hermitean, and $\det S_n$ is a real function of $\lm$, which is close to the standard parabola with minimum near $\al_n$ and minimal value about $-p_np_{-n}=-\n{p_n}^2$. The distance of its two roots is then about $\n{p_n}$. With foresight to the complex case, however, we want to consider a more general situation.

\begin{lem}  \label{l:skew}
Let $q\in \Bo_m\,$ for some $m\ge1$ and $p=\Phi_m(q)$. If
\[
  \frac14 \le \smash[b]{\n{\frac{p_n}{p_{-n}}}} \le 4
\]
for any $n\ge M_m$, then
\[
  \n{p_np_{-n}} \le \n{\gm_n(q)}^2 \le 9\n{p_np_{-n}}.
\]
\end{lem}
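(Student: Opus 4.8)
The idea is to locate the two roots $\xi_\pm$ of $\det S_n$ more precisely than in Lemma~\ref{l:gap}, exploiting that at the distinguished value $\lm=\al_n(q)$ the diagonal of $S_n$ vanishes, so that $\det S_n(\al_n) = -p_np_{-n}$. Write, as in the proof of Lemma~\ref{l:gap}, $\det S_n = g_+g_-$ with $g_\pm = \lm-\sg_n-a_n\mp\phi_n$ and $\phi_n=\sqrt{c_nc_{-n}}$, but now evaluate everything relative to $\al_n$. At $\lm=\al_n$ we have $a_n(\al_n)=\al_n-\sg_n$, hence $g_\pm(\al_n) = \mp\phi_n(\al_n)$ and $\phi_n(\al_n)^2 = c_n(\al_n)c_{-n}(\al_n) = p_np_{-n}$. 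So the natural ``approximate roots'' are $\al_n\pm\phi_n(\al_n)$, and I would show the true roots $\xi_\pm$ sit within a disc of radius a small multiple of $r_n\defequal\n{p_np_{-n}}^{1/2}$ around these, by a Rouch\'e/degree argument identical in structure to the one already used: compare $g_-$ on the boundary of $K_n=\set{\lm:\n{\lm-(\al_n+\phi_n(\al_n))}\le \theta r_n}$ with the affine function $h=\lm-\al_n-\phi_n(\al_n)$ (which has $\n h=\theta r_n$ on $\del K_n$), and control $\n{g_--h}$ by $\n{a_n-a_n(\al_n)}_{K_n}+2\n{\phi_n}$-type terms using Cauchy's estimate on $\del_\lm a_n$ and the already-established bounds $\n{a_n}_{U_n}\le m^2/4n$, $\n{c_n-q_n}$, etc., from Lemmas~\ref{l:be} and~\ref{l:ac}. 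This pins $\xi_-$ near $\al_n-\phi_n(\al_n)$ and $\xi_+$ near $\al_n+\phi_n(\al_n)$, so that
\[
  \n{\gm_n(q)} = \n{\xi_+-\xi_-} = 2\n{\phi_n(\al_n)}\pas{1+O(1/n)} = 2\n{p_np_{-n}}^{1/2}\pas{1+O(1/n)},
\]
and for $n\ge M_m$ the error is small enough that $1\le\n{\gm_n}^2/\n{p_np_{-n}}\le 9$, which is the claim. (Note the two-sided ratio hypothesis $\tfrac14\le\n{p_n/p_{-n}}\le4$ is what prevents $\phi_n(\al_n)=\sqrt{p_np_{-n}}$ from being anomalously small compared to $\max(\n{p_n},\n{p_{-n}})$ — it guarantees neither off-diagonal entry dominates, so the upper bound $\n{\gm_n}^2\le9\n{p_np_{-n}}$ is not vacuous; actually this bound is just Lemma~\ref{l:gap} restated via $c_n(\al_n)=p_n$, so the real content is the \emph{lower} bound $\n{p_np_{-n}}\le\n{\gm_n}^2$.)

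For the lower bound specifically, the cleanest route is: since $\det S_n$ is a monic quadratic in $\lm$ with roots $\xi_\pm$, we have $\det S_n(\al_n) = (\al_n-\xi_+)(\al_n-\xi_-)$, and the left side equals $-p_np_{-n}$; hence $\n{p_np_{-n}} = \n{\al_n-\xi_+}\cdot\n{\al_n-\xi_-}$. Combined with the localization above — which gives $\n{\al_n-\xi_\pm}\le (1+\theta)\n{\phi_n(\al_n)} + (\text{Cauchy error})$ and $\n{\al_n-\xi_\pm}\ge(1-\theta)\n{\phi_n(\al_n)} - (\text{error})$ — one gets $\n{p_np_{-n}}\le C\n{\phi_n(\al_n)}^2$ and $\n{\gm_n}^2 = \n{\xi_+-\xi_-}^2\ge \n{p_np_{-n}}$ after tracking constants and using $n\ge M_m = 2^{10}m^2$ to make the $O(1/n)$ terms negligible. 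The factor $9$ on the upper side absorbs exactly the slack already present in Lemma~\ref{l:gap}'s constant.

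The main obstacle is the bookkeeping of the Cauchy-estimate error terms: one needs $\n{\del_\lm a_n}$ and $\n{\del_\lm c_n}$ to be $O(m^2/n^2)$ on a disc of radius comparable to $r_n\le 2\no{q}$ around $\al_n$, which requires checking that this disc still lies inside $U_n$ (true since $r_n\le 2\no{q}\le m/2\le M_m/2 \ll n$) so that the bounds of Lemma~\ref{l:be} apply there, and then verifying that for $n\ge M_m$ these errors are small enough — say below $\tfrac1{100}r_n$ — that the Rouch\'e comparison goes through with $\theta$ chosen so the final ratio lands in $[1,9]$. I expect the cleanest packaging is to prove $\n{\xi_\pm - (\al_n\pm\phi_n(\al_n))}\le \tfrac14\n{\phi_n(\al_n)}$ for $n\ge M_m$ and derive both inequalities from that; the ratio hypothesis enters only to ensure $\n{\phi_n(\al_n)}^2=\n{p_np_{-n}}$ is genuinely of order $\max(\n{p_n}^2,\n{p_{-n}}^2)$, keeping the statement non-degenerate.
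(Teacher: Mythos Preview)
Your overall strategy---locate $\xi_\pm$ in small discs about $\alpha_n\pm\phi_n(\alpha_n)$ by a Rouch\'e comparison with $h_\pm=\lambda-\alpha_n\mp\phi_n(\alpha_n)$, then read off $r_n\le|\gamma_n|\le3r_n$---is exactly the paper's. But you have misidentified where the ratio hypothesis $\tfrac14\le|p_n/p_{-n}|\le4$ enters, and this is the one non-routine step.

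The difference $g_\pm-h_\pm=-(a_n-a_n(\alpha_n))\mp(\phi_n-\phi_n(\alpha_n))$ requires bounding $|\phi_n-\phi_n(\alpha_n)|$ on a disc of radius comparable to $r_n$ about $\alpha_n$. A Cauchy bound on $\partial_\lambda c_{\pm n}$ is not enough by itself: since $\phi_n=\sqrt{c_nc_{-n}}$, the derivative $\partial_\lambda\phi_n$ blows up wherever $c_n$ or $c_{-n}$ vanishes. The ratio hypothesis is precisely what forces $|p_{\pm n}|\ge r_n/2$; combined with $|c_{\pm n}-p_{\pm n}|\le r_n/18$ on the disc, this keeps $|c_{\pm n}|\ge 7r_n/16$ there, so a single-valued analytic branch of the root exists and $|\partial_\lambda\phi_n|$ is controlled via $|c_n/c_{-n}|,|c_{-n}/c_n|\le6$. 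Your claim that the hypothesis ``enters only to keep the statement non-degenerate'' is therefore wrong: without it the Rouch\'e estimate for the $\phi_n$ term fails.

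Two smaller points. First, $\det S_n$ is \emph{not} a monic quadratic in $\lambda$---the entries $a_n,c_{\pm n}$ themselves depend on $\lambda$---so the identity $\det S_n(\alpha_n)=(\alpha_n-\xi_+)(\alpha_n-\xi_-)$ is false and your ``cleanest route'' to the lower bound collapses. Second, the upper bound is not just Lemma~\ref{l:gap}: that lemma bounds $|\gamma_n|^2$ by $9|c_nc_{-n}|_{U_n}$, a supremum over the whole strip, which can be much larger than the single value $|p_np_{-n}|$ at $\lambda=\alpha_n$. Both inequalities of the present lemma come from the same refined localization, and both need the ratio hypothesis.
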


\begin{proof}
As in the proof of Lemma~\ref{l:gap}, write $\det S_n = g_+g_-$ with
\[
  g_\pm = \lm-\sg_n-a_n\mp \phi_n, \qquad
  \phi_n = \sqrt{c_nc_{-n}}.
\]
The assumptions imply that
\[
  \xi_n \defequal \phi_n(\al_n) = \sqrt{p_np_{-n}} \ne 0,
  \qquad
  r_n \defequal \n{\xi_n} > 0,
\]
so we may choose a fixed sign of the root locally around~$\al_n$.

We compare $g_+$ with
$
  h_+ = \lm-\sg_n-a_n(\al_n)-\phi_n(\al_n)
$
on the disc 
\[
  D_n^+ = \set{\lm:\n{\lm-(\al_n+\xi_n)}\le r_n/2}.
\] 
As $h_+(\al_n+\xi_n) = \xi_n - \phi_n(\al_n) = 0$,
we have 
\[
  \eval\n{h_+}\at_{\del D_n^+} = \frac{r_n}{2}.
\] 
On the other hand, we momentarily show that on $\Do_n = \set{\lm:\n{\lm-\al_n}\le2r_n}$,
\[
  \n{\del_\lm a_n}_{\Do_n} \le \frac1{18}, 
  \qquad
  \n{\del_\lm\phi_n}_{\Do_n} \le \frac1{6},
\]
which will give
\begin{align*}
  \n{h_+-g_+}_{D_n^+}
  &\le \n{a_n-a_n(\al_n)}_{\Do_n} + \n{\phi_n-\phi_n(\al_n)}_{\Do_n} \\
  &\le \frac{r_n}{9} + \frac{r_n}{3}  \\
  &<   \frac{r_n}{2} 
   =   \eval\n{h_+}\at_{\del D_n^+}.
\end{align*}
It follows that the unique root of $g_+$ within $D_n$ must be contained in $D_n^+$, that is,
\[
  \xi_+ = \lm_n^+ \in D_n^+.
\]
Similarly,
$
  \xi_-=\lm_n^-\in D_n^- = \set{\lm:\n{\lm-(\al_n-\xi_n)}\le r_n/2}
$.
Since $\n{\xi_n}=r_n$, we conclude that
\[
  r_n \le \n{\gm_n} = \n{\lm_n^+-\lm_n^-} \le 3r_n, 
\]
which is the claim.

\begin{figure}[tbp]
\begin{center}
\includegraphics[scale=1.0]{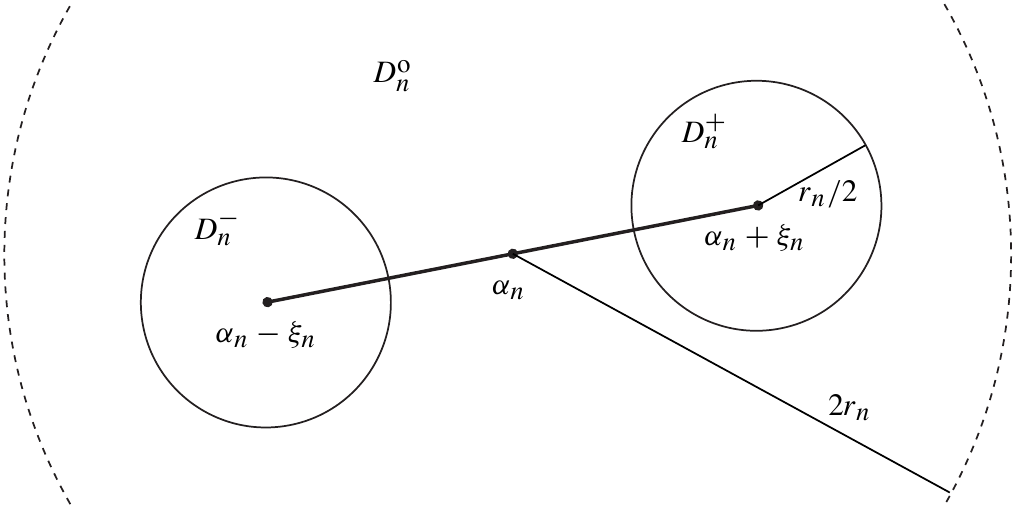}
\end{center}
\end{figure}

It remains to prove the estimates for $\del_\lm  a_n$ and $\del_\lm\phi_n$. In view of Lemma~\ref{l:be} and Cauchy's inequality,
\[
  \n{\del_\lm a_n}_{\Do_n}, \n{\del_\lm c_n}_{\Do_n} \le \frac1{36},
\]
since the distance of $\Do_n$ to the boundary of $U_n$ is at least $9n$. With $c_n(\al_n)=p_n$, $r_n=\sqrt{\n{p_np_{-n}}}$ and the hypotheses of the lemma, we get
\[
  \n{c_n-p_n}_{\Do_n} \le \frac{r_n}{16}, 
\] or \[
  \frac{r_n}{2} \le \n{p_n} \le 2r_n.
\]
Hence,
\[
  \smash[t]{
  \frac{7}{16}\,r_n = \pas{\frac12-\frac1{16}}\,r_n \le
  \eval\n{c_n}\at_{\Do_n} \le
  \pas{2+\frac{1}{16}}\,r_n = \frac{33}{16}\,r_n,
  }
\]
and therefore
\[
  \n{\frac{c_n}{c_{-n}}}_{\Do_n}, \n{\frac{c_{-n}}{c_{n}}}_{\Do_n} \le 6.
\]
Differentiating $\phi_n=\sqrt{c_nc_{-n}}$ with respect to $\lm$ we finally obtain
\[
  \n{\del_\lm\phi_n}_{\Do_n} \le
  3\pasi{\n{\del_\lm c_n}_{\Do_n}+\n{\del_\lm c_{-n}}_{\Do_n}} \le \frac16
\]
as claimed. This completes the proof.
\end{proof}

We now prove Theorem~\ref{t:real}. Suppose $q\in\Ho$ is real, and its gap lengths satisfy
\[
  \sum_{n\ge1} w_n^2\n{\gm_n(q)}^2 < \infty.
\]
Fix $m\ge4\no{q}$, and consider the coefficients $p_n=c_n(\al_n)$ for $\n{n}\ge M_m$. As $q$ is real, $p_{-n}=\bar p_n$. So the preceding lemma applies, giving
\[
  \n{p_{-n}} = \n{p_n} \le \n{\gm_n(q)}, \qquad  n\ge M_m.
\]
But this means that $p=\Phi_m(q)\in\Hh^w$, and the result follows with Proposition~\ref{sub}.

\section{Regularity: The Complex Case}

Let $\dl_n$ be a family of alternate gap lengths. Since the involved constant $C_\dl$ is supposed to depend only on $\no{q}$, we have on any ball $\Bo_m$ an estimate
\[
  \no{\d_q\dl_n-t_n} \le \frac{C_m}{n},
  \qquad
  t_n = \cos2n\pi(x+\xi_n).
\]
with a constant $C_m$ depending only on~$m$.

\begin{lem}
If $q\in\Bo_m$ for some $m\ge1$ and $p=\Phi_m(q)$, then
\[
  \n{\dl_n(q)-\pas{\kp p_n+\bar\kp p_{-n}}} \le
  \frac14(\n{p_n}+\n{p_{-n}})
\]
for $n\ge N_m\defeq\max(M_m,16\,C_m)$, where $\kp=\e^{2n\pi i\xi_n}/2$.
\end{lem}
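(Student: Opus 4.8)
The idea is to Taylor-expand the alternate gap length $\dl_n$ along the segment from $q$ to the potential $\tilde q$ whose adapted Fourier coefficients agree with those of $q$ except that the $\pm n$-modes are switched off, and to use the defining properties of $\dl_n$ together with the fact that switching off those two modes makes $S_n(\al_n)$ vanish, hence makes $\dl_n$ vanish. Concretely, write $p=\Phi_m(q)$, let $\Phi_m(\tilde q)=p-p_ne_{2n}-p_{-n}e_{-2n}$, so that $S_n(\al_n(\tilde q),\tilde q)=0$; since $\lm_n^+=\lm_n^-$ is then also geometrically double, the first defining property of alternate gap lengths gives $\dl_n(\tilde q)=0$. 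Thus $\dl_n(q)=\dl_n(q)-\dl_n(\tilde q)=\int_0^1\w{\d_q\dl_n(q_t)}{q-\tilde q}\,dt$ along $q_t=\tilde q+t(q-\tilde q)$, all of which lies in $\Bo_m$ (or $\Bo_{2m}$, after harmlessly enlarging $m$), so the uniform estimate $\no{\d_q\dl_n-t_n}\le C_m/n$ is available along the path.

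The main term comes from replacing $\d_q\dl_n$ by $t_n=\cos2n\pi(x+\xi_n)$: one computes $\w{t_n}{q-\tilde q}$, and here the point is that $q-\tilde q$ is \emph{not} simply $q_ne_{2n}+q_{-n}e_{-2n}$ — rather, since $\Phi_m$ is a near-identity diffeomorphism and $\Phi_m(q)-\Phi_m(\tilde q)=p_ne_{2n}+p_{-n}e_{-2n}$, we have $q-\tilde q=D\Phi_m^{-1}(\cdots)(p_ne_{2n}+p_{-n}e_{-2n})$, so $q-\tilde q=p_ne_{2n}+p_{-n}e_{-2n}+O(\tfrac1n(\n{p_n}+\n{p_{-n}}))$ in the $\no{\cd}$-norm, using $\nn{D\Phi_m-I}\le1/8$ and the fact that the two modes carried are high ($\n{n}\ge M_m$, so the correction from $D\Phi_m^{-1}-I$ involves only the large-$n$ block where $\Phi_m-\id$ is genuinely small, of size $\lesssim m^2/n$). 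Since $e_{\pm2n}$ as a $1$-periodic function is $\e^{\pm2n\pi ix}$, and $\cos2n\pi(x+\xi_n)=\tfrac12\e^{2n\pi i\xi_n}\e^{2n\pi ix}+\tfrac12\e^{-2n\pi i\xi_n}\e^{-2n\pi ix}$, the pairing $\w{t_n}{q-\tilde q}$ picks out exactly $\kp p_n+\bar\kp p_{-n}$ with $\kp=\e^{2n\pi i\xi_n}/2$, up to an error controlled by $\tfrac1n(\n{p_n}+\n{p_{-n}})$.

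It remains to absorb all error contributions into $\tfrac14(\n{p_n}+\n{p_{-n}})$: the error from $\d_q\dl_n-t_n$ is at most $(C_m/n)\nn{q-\tilde q}_\o\le(C_m/n)\cdot\tfrac98(\n{p_n}+\n{p_{-n}})$, and the error from $q-\tilde q$ versus $p_ne_{2n}+p_{-n}e_{-2n}$ is $\lesssim(m^2/n)(\n{p_n}+\n{p_{-n}})$; choosing $n\ge N_m=\max(M_m,16\,C_m)$ (and, if needed, enlarging $M_m$ to also dominate the $m^2$ constant, which is already built into $M_m=2^{10}m^2$) makes the total at most $\tfrac14(\n{p_n}+\n{p_{-n}})$. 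The main obstacle is the bookkeeping in the second paragraph: one must be careful that $q-\tilde q$ is the image under $D\Phi_m^{-1}$, not the identity, of the two-mode vector, and that along the whole segment $q_t$ the potentials stay in the ball on which Lemma~\ref{l:be} and the diffeomorphism bounds of Proposition~\ref{Phi} are valid — this is why one passes from $m$ to $2m$ up front and why the threshold $N_m$ must dominate both $M_m$ and $C_m$.
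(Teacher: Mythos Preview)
Your approach is essentially identical to the paper's: define $q^\o$ (your $\tilde q$) by $\Phi_m(q^\o)=p-p_ne_{2n}-p_{-n}e_{-2n}$, use $\dl_n(q^\o)=0$, integrate $\d\dl_n$ along the segment, and split both $\d\dl_n=t_n+(\d\dl_n-t_n)$ and $q-q^\o=(p-p^\o)+\Th_m(p-p^\o)$ with $\Th_m=\int_0^1(D\Phi_m^{-1}-I)\,dt$. The one place where you overreach is the claim that $\no{q-\tilde q-(p_ne_{2n}+p_{-n}e_{-2n})}=O(m^2/n)$: there is no obvious $1/n$ gain in the full $\no{\cdot}$-norm of $(D\Phi_m^{-1}-I)h$ just because $h$ is supported at modes $\pm n$ (the output is spread over all $|k|\ge M_m$), and your justification confuses the input support with the output support. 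Fortunately this refinement is unnecessary: the paper simply invokes the uniform bound $\nn{D\Phi_m^{-1}-I}\le1/6$ from Proposition~\ref{Phi}, which together with $\no{\tht_n}\le C_m/n\le1/16$, $\no{t_n}\le1$, and $\no{p-p^\o}\le\n{p_n}+\n{p_{-n}}$ already yields a total error below $\tfrac14(\n{p_n}+\n{p_{-n}})$. Replace your $O(m^2/n)$ step by this uniform bound and the argument closes exactly as in the paper.
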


\begin{proof}
Given $p=\sum_{k\ne0} p_{k}e_{2k} = \Phi_m(q)$ and $n\ge M_m$, let
\[
  p^\o = \sum_{0<\n{k}\ne n} p_{k}e_{2k},
  \qquad
  q^\o = \Phi_m\inv(p^\o).
\]
Then the \m{\n{n}}-th Fourier coefficients of $\Phi_m(q^\o)$ vanish, which means that $\al_n(q^\o)$ is a double periodic eigenvalue of $q^\o$ of geometric multiplicity~2. Therefore,
\[
  \dl_n(q^\o) = 0.
\]
With $q^t=tq+(1-t)q^\o$, we get
\begin{align*}
  \dl_n(q)
  &= \dl_n(q)-\dl_n(q^\o) \\
  &= \int_0^1 \pair*{\d\dl_n(q^t)}{q-q^\o}\dt \\
  &= \pair*{t_n}{q-q^\o} + \pair*{\tht_n}{q-q^\o}
\end{align*}
with $\tht_n=\int_0^1 (\d\dl_n-t_n)(q^t)\dt$. Moreover,
\[
  q-q^\o = p-p^\o + \Th_m(p-p^\o),
\]
with $\Th_m = \int_0^1 (D\Phi_m\inv-I)(\Phi_m(q^t))\dt$. Altogether we obtain
\begin{align*}
  \dl_n(q) 
  = \pair*{t_n}{p-p^\o} 
     + \pair*{t_n}{\Th_m(p-p^\o)} + \pair*{\tht_n}{q-q^\o}.
\end{align*}
The identity $\pair*{t_n}{p-p^\o}=\kp p_n+\bar\kp p_{-n}$, the estimates
\begin{align*}
  \no{\tht_n} \le \frac{C_m}{n} \le \frac1{16},
  \qquad
  \nn{\Th_m}_{L(\Ho,\Ho)} \le \frac1{6} 
\end{align*}
by Lemma~\ref{Phi}, as well as $\no{t_n}\le1$ and 
$\no{p-p^\o}\le\n{p_n}+\n{p_{-n}}$ then give the claim.
\end{proof}

We now prove Theorem~\ref{t:complex}. Given $q\in B^w_m$ and assuming $n\ge N_m$, we have by the preceding lemma
\[
  \n{\dl_n(q)}^2 \le 
  \pas{\n{p_n}+\n{p_{-n}}}^2 \le
  2\n{c_n}_{U_n}^2 + 2\n{c_{-n}}_{U_n}^2.
\]
We are thus in exactly the same situation as at the end of section~\ref{s:gap}, modulo a factor $4/9$. So we get
\[
  \sum_{n\ge N} w_n^2\n{\dl_n(q)}^2 
  \le 4\nw{T_Nq}^2 + \frac{256}{N}\nw{q}^4
\]
for all $N\ge N_m$, as well as
\[
  w_n\n{\dl_n(q)} \le 4\nw{q}
\]
for all $n\ge N_m$.
This establishes (i) of Theorem~\ref{t:complex}.

To prove the converse statement (ii), we only need to augment the proof of Theorem~\ref{t:real} in the case where $p_n$ and $p_{-n}$ are not about the same size. So suppose $q$ is in $\Ho$ with
\[
  \sum_{n\ge1} w_n^2\pas{\n{\gm_n(q)}+\n{\dl_n(q)}}^2 < \infty.
\]
Fix $m\ge4\no{q}$, and consider the coefficients $p_n=c_n(\al_n)$ for $\n{n}\ge M_m$. For any such~$n$, for which the hypotheses of Lemma~\ref{l:skew} are satisfied, we have
\[
  \n{p_n}, \n{p_{-n}} \le 2\n{\gm_n(q)}.
\]
Otherwise, we may assume that $\n{p_n}\ge4\n{p_{-n}}$, and we can use the preceding lemma to the effect that
\begin{align*}
  \n{\dl_n(q)}
  &\ge \n{\kp p_n+\bar\kp p_{-n}} - \frac14\pas{\n{p_n}+\n{p_{-n}}} \\
  &\ge \frac12\cd\frac34 \n{p_n} - \frac14\cd\frac54 \n{p_n} \\
  &= \frac1{16} \n{p_n}.
\end{align*}
So in this case we get
\[
  \n{p_n}, \n{p_{-n}} \le 16 \n{\dl_n(q)}.
\]
We again conclude that $p=\Phi_m(q)\in\Hh^w$, and the result follows with Proposition~\ref{sub}. This proves Theorem~\ref{t:complex}.

\section{Superexponential Weights}  \label{s:super}

We prove Theorem~\ref{t:super} by using the gap estimates already established for exponential weights. If $q\in\Hh^w$ with a strictly superexponential weight~$w$, then in particular $q\in\Hh^{a}$ for all $a\ge0$, where $a$ stands for the exponential weight $\exp(a\n\cd)$. Given any $n\ge4\nw{q}$, we may thus choose
\[
  a = \psi(\nt) = \min_{m\ge1}\frac{\log\nt w(m)}{m} \ge 0, \qquad
  \nt = \frac{n}{4\nw{q}} \ge 1.
\]
Then $\e^{am}/w_m\le\nt$ for all $m\ge1$, and consequently
\[
  \nn{q}_{a} \le \sup_{m\ge1} \frac{\e^{am}}{w_m} \nw{q} \le
  \nt\nw{q} = \frac{n}{4}.
\]
We may thus apply the individual gap estimate given at the end of section~\ref{s:gap} to obtain
\[
  \n{\gm_n(q)} \le \frac{6}{a_n}\nn{q}_{a} \le
  2n\,\e^{-an} = 
  2n\,\e^{-n\psi(\nt)}.
\]
This is the claim, and Theorem~\ref{t:super} is proven.

Incidentally, the result is the same for alternate gap lengths, using the individual estimate given at the end of the preceding section. We only have to assume in addition that $n\ge N_m$, a constant depending only on $\no{q}$.

\section{Extensions}

\paragraph{Subexponential weights}
Our definition of a strictly subexponential weight is chosen to allow for a convenient  hypothesis of Lemma~\ref{l:w}. But we might as well \emph{define} a weight $w\in\Mm$ to be strictly subexponential, if $\log w(n)/n\to0$ and
\[
  \min(w,v_\ep) \in \Mm
\]
for all sufficiently small positive~$\ep$. Then Theorems~\ref{t:real} and~\ref{t:complex} remain valid.

\paragraph{$L^p$-spaces}
For the sake of brevity and clarity we restricted ourselves to spaces $\Hh^w$ defined in terms of \m{L^2}-type norms. But we may also consider the spaces
\[
  \Hh^w_r = \set[2]{q=\sum_{n\in\ZZ}q_ne_{2n}: \nn{q}_{w,r}<\infty}
\]
for $1\le r\le\infty$, where
\begin{gather*}
  \nn{q}_{w,r}^r = \sum_{n\in\ZZ} w_n^r\n{q_n}^r, \qquad 1\le r<\infty, \\
  \nn{q}_{w,\infty} = \sup_{n\in\ZZ} w_n\n{q_n}.
\end{gather*}
The shifted norms $\nn{\cd}_{w,r;i}$ are defind analogously.
The results remain the same, except for some minor quantitative aspects of constants and thresholds. The only new ingredient is an extended version of Lemma~\ref{l:T}.

\begin{proclaim}[Lemma \ref{l:T}-R]
If $q\in\Hh^w_r$ with $w\in\Mm$, then for $n\ge1$ and $\lm\in U_n$,
\[
  T_n=\VAli Q_n
\]
is a bounded linear operator on $\Bb^w_r$ with norm $\nn{T_n}_{w,r;i}\le \smash{\dfrac{c_r}{n}}\nn{q}_{w,r}$ for all $i\in\ZZ$, where $c_1=1$ and otherwise
\[
  c_r^s = \sum_{m\ge1} \frac{2}{m^s}, \qquad 
  s=\frac{r}{r-1}.
\]
\end{proclaim}

\begin{proof}
Consider the case $1<r<\infty$. As in the proof of Lemma~\ref{l:T}, we may write
\[
  g =
  \Ali Q_nf = 
  \sum_{\n{m}\ne n} \frac{f_m}{\lm-m^2\pi^2}\,e_m =
  \sum_{m\in\ZZ} g_me_m,
\]
and by Hölder's inequality for $r^{-1}+s^{-1}=1$ we get
\[
  \nn{ge_i}_{w,1} \le
  \nn{f}_{w,r;i}
  \pas[3]{\,\smash[b]{\sum_{\n{m}\ne n}}\frac{1}{\n{m^2-n^2}^s} }^{1/s}.
\]
One verifies that
\[
  \sum_{\n{m}\ne n}\frac{1}{\n{m^2-n^2}^s} \le
  \frac1{n^s} \sum_{m\ge1} \frac2{m^s} \le
  c_r^s,
\]
so that $\nn{ge_i}_{w,1} \le c_r\nn{f}_{w,r;i}$. By standard estimates for the convolution of two sequences and the submultiplicity of the weights, one then arrives at
\[
  \nn{T_nf}_{w,r;i} =
  \nn{Vg}_{w,r;i} \le
  \nn{V}_{w,r}\nn{ge_i}_{w,1} \le
  c_r\nn{q}_{w,r}\nn{f}_{w,r;i}.
\]
This holds for any $f\in\Hh^w_r$, so the claim follows for $1<r<\infty$. The remaining cases are handled analogously.
\end{proof}


\end{document}